\documentclass{article}

\usepackage{amssymb, latexsym,pdfsync,amsmath,amsthm, ulem,hyperref}
\usepackage{fullpage}
\newtheorem{theorem}{Theorem}

\newtheorem{lemma}{Lemma}

\theoremstyle{definition}

\newtheorem{remark}{Remark}

\newcommand{\FF}{\mathbb{F}}
\newcommand{\Fp}{\mathbb{F}_p}
\newcommand{\Fq}{\mathbb{F}_q}
\newcommand{\Fqn}{\mathbb{F}_{q^n}}

\newcommand{\HH}{\mathcal H}

\newcommand{\D}{\mathcal D}
\newcommand{\G}{\mathcal G}
\newcommand{\C}{\mathcal C}


\def\F{\mathbb{F}}
\def\K{\mathbb{K}}
\def\Fq{{\mathbb{F}}_q}

\def\Aut{\mathrm{Aut}}

\def\PG{\mathrm{PG}}

\def\GL{\mathrm{GL}}
\def\PGammaL{\mathrm{P\Gamma L}}
\def\GammaL{\mathrm{\Gamma L}}
\def\dim{\mathrm{dim}}

\def\Tr{\mathrm{Tr}}
\def\End{\mathrm{End}}
\def\tr{\mathrm{tr}}

\newcommand{\npmatrix}[1]{\left( \begin{matrix} #1 \end{matrix} \right)}

\newcommand{\rank}{\mathrm{rank}}

\setlength{\parindent}{0pt}
\setlength{\parskip}{1ex plus 0.5ex minus 0.2ex}

\begin{document}
\title{A new family of linear maximum rank distance codes}
\author{John Sheekey}
\date{\today}
\maketitle

\begin{abstract}
In this article we construct a new family of linear maximum rank distance (MRD) codes for all parameters. This family contains the only known family for general parameters, the Gabidulin codes, and contains codes inequivalent to the Gabidulin codes. This family also contains the well-known family of semifields known as Generalised Twisted Fields. We also calculate the automorphism group of these codes, including the automorphism group of the Gabidulin codes.
\end{abstract}

\section{Preliminaries}

\subsection{Rank metric codes}

Delsarte introduced rank metric codes in \cite{Delsarte1978}. A {\it rank metric code} $\C$ is a subset of a matrix space $M = M_{m\times n}(\F)$, $m\leq n$, $\F$ a field, equipped with the distance function $d(X,Y) := \rank(X-Y)$. A rank metric code is called $\K${\it -linear} if it forms an $\K$-subspace of $M$ for some subfield $\K \leq \F$.  A {\it maximum rank distance code} (MRD-code) is a rank metric code over a finite field $\Fq$ meeting the Singleton-like bound $|\C| \leq q^{n(m-d+1)}$, where $d$ is the minimum distance of $\C$. If $\C$ is an $\FF_{q_0}$-linear MRD code in $M_{m \times n}(\Fq)$ with $d=m-k+1$ for a subfield $\F_{q_0}$ of $\Fq$, we say that $\C$ has parameters $[nm,nk,m-k+1]_{q_0}$, with the subscript omitted when there is no ambiguity. If a code is closed under addition (which coincides with the definition of $\Fp$-linear, where $p$ is prime and $q=p^e$), we say it is {\it additive}.

Delsarte \cite{Delsarte1978} and Gabidulin \cite{Gab1985} constructed linear MRD-codes over the finite field $\Fq$ for every $k,m$ and $n$. In the literature these are usually called {\it (generalised) Gabidulin codes}, although the first construction was by Delsarte. When $n=m$ and $k=1$, MRD-codes correspond to algebraic structures called {\it quasifields}, see Subsection \ref{ssec:sem}. Cossidente-Marino-Pavese \cite{CoMaPa} recently constructed non-linear MRD-codes for $n=m=3$, $k=2$.  When $n=m$ and $1<k<n-1$, no other linear MRD-codes were known. In this paper we will construct a new family of linear MRD-codes for each $k$, and we will show that they contain codes inequivalent to the generalised Gabidulin codes.

A good overview of MRD-codes can be found in \cite{Ravagnani}. Similar problems for symmetric, alternating and hermitian matrices have been studied in for example \cite{GabPil2004}, \cite{Schmidt}, \cite{GoQu2009b}, \cite{GoLaShVa2014}.

\subsection{(Pre)semifields and quasifields}
\label{ssec:sem}

A finite {\it presemifield} is a division algebra with a finite number of elements in which multiplication is not necessarily associative; if a multiplicative identity element exists, it is called a {\it semifield}. We refer to \cite{LaPo2011} for background, definitions, and terminology. Presemifields are studied in equivalence classes known as {\it isotopy classes}. Presemifields of order $q^n$ with centre containing $\F_{q_0}\leq \Fq$ and {\it left nucleus} containing $\Fq$ are in one-to-one correspondence with $[n^2,n,n]_{q_0}$ MRD-codes in $M_n(\Fq)$ (i.e. $\F_{q_0}$-linear MRD-codes with $k=1$). In the theory of semifields, such spaces are called {\it semifield spread sets}. Many constructions for finite semifields are known, see for example \cite{Kantor2006} and \cite{LaPo2011}. There are two important operations defined on presemifields; the {\it dual}, which is the opposite algebra, and the {\it transpose}. These together form a chain of six (isotopy classes of) semifields, known as the {\it Knuth orbit}.

A {\it quasifield} is an algebraic structure satisfying the axioms of a division algebra, except perhaps left distributivity. Quasifields are in one-to-one correspondence with MRD-codes with $k=1$ which are not necessarily linear (see \cite{Dembowski}). Explicit statements of the correspondence between semifields, quasifields and MRD-codes can be found in \cite{DeKiWaWi}.

\subsection{Equivalence}
\label{ssec:equiv}

There are different concepts of equivalence for rank metric codes, see for example \cite{HuaWan}, \cite{Berger}, \cite{Morrison}. In this paper, two rank metric codes $\C,\C' \subset M_n(\Fq)$ will be said to be {\it equivalent} if there exist invertible $\Fq$-linear transformations $A,B$ and a field automorphism $\rho \in \Aut(\Fq)$ such that $\C' = A\C^{\rho} B := \{AX^{\rho}B:X \in \C\}$ where $X^{\rho}$ is the matrix obtained from $X$ by applying $\rho$ to each entry. These are precisely the linear isometries for the rank-metric, as in \cite{HuaWan}. Note that each of these operations preserve the rank distance, and they form a group. The subgroup fixing $\C$ will be called the {\it automorphism group} of $\C$, and is denoted by $\Aut(\C)$.

We call the set $\hat{\C} := \{\hat{X}:X \in \C\}$ the {\it adjoint} of $\C$, where $\hat{X}$ denotes the adjoint of $X$ with respect to some non-degenerate symmetric bilinear form. This form is often chosen so that the adjoint is precisely matrix transposition, though we will not assume this. Note that taking the adjoint also preserves rank distance, and is often included in the definition of equivalence. However we find it more convenient to omit it, and if $\C$ is equivalent to $\hat{\C'}$, we say that $\C$ and $\C'$ are {\it adjoint-equivalent}.

When $k=1$ and $\C_1$, $\C_2$ are linear, equivalence corresponds precisely to the presemifields associated to each code $\C_i$ being {\it isotopic}, while adjoint-equivalence corresponds to one presemifield being isotopic to the transpose of the other. 

%

\subsection{Subspace codes}

A {\it subspace code} is a set of subspaces of a finite vector space, with the distance function $d_s(U,V) = \dim(U)+\dim(V)-2\dim(U \cap V)$. If all elements of the code have the same dimension, it is called a {\it constant dimension code}. These codes were introduced by Koetter and Kschischang \cite{KoKs2008}, and have applications in random network coding. Rank metric codes define constant dimension subspace codes in the following way (see for example \cite{GaYa2010}). Given an $m\times n$ matrix $X$ we define the subspace $S_X = \{(u,Xu): u \in \Fq^n\}$ of $\Fq^{n+m}$. Clearly, each $S_X$ is $n$-dimensional, and $d_s(S_X,S_Y) = 2\rank(X-Y)$. Hence an MRD-code with minimum distance $d$ defines a subspace code with minimum distance $2d$.  This is known as a {\it lifted} MRD-code \cite{SiKsKo2008}. However, not every subspace code defines an MRD-code when $d<n$, see for example \cite{HoKiKu}, \cite{LiHo2014}. Many of the best known constructions are constructed by perturbing a lifted Gabidulin code \cite{AiHoLi2016}. In the case $d=n$, we have the well-known correspondence between {\it spreads} and quasifields, and in the linear case between {\it semifield spreads} and semifields.


\subsection{Delsarte's duality theorem}
\label{subsec:dual}

Define the symmetric bilinear form $b$ on $M_{m,n}(\FF)$ by
\[
b(X,Y) := \tr(\Tr(XY^T)),
\]
where $\Tr$ denotes the matrix trace, and $\tr$ denotes the absolute trace from $\Fq$ to $\FF_p$, where $p$ is prime and $q=p^e$. Define the {\it Delsarte dual} $\C^{\perp}$ of an $\Fp$-linear code $\C$ by
\[
\C^{\perp} := \{Y:Y \in M_{m,n}(\Fq), b(X,Y)=0 ~\forall X \in \C\}.
\]

We choose the name Delsarte dual to distinguish from the notion of dual in semifield theory. Delsarte \cite[Theorem 5.5]{Delsarte1978} proved the following theorem, using the theory of association schemes. An elementary proof can be found in \cite{Ravagnani}.
\begin{theorem}{\rm \cite[Theorem 5.5]{Delsarte1978}}
Suppose $\C$ is an $[nm,nk,m-k+1]_p$ MRD code in $M_{m\times n}(\Fq)$. Then the Delsarte dual $\C^{\perp}$ is an $[nm,n(m-k),k+1]_p$ MRD code in $M_{m\times n}(\Fq)$.
\end{theorem}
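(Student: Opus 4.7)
My plan is to handle the cardinality of $\C^{\perp}$ by the non-degeneracy of $b$ and then concentrate the work on the minimum distance, which I would attack by a Singleton-style projection on a transform of $\C$, reducing everything to the surjectivity of $\tr:\Fq\to\Fp$.

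For the cardinality, I would note that $b$ is a non-degenerate $\Fp$-bilinear form on $M_{m\times n}(\Fq)$: the matrix pairing $(X,Y)\mapsto\Tr(XY^T)$ is non-degenerate over $\Fq$, and $\tr:\Fq\to\Fp$ is non-degenerate. Hence $|\C^{\perp}|=q^{nm}/|\C|=q^{n(m-k)}$. The Singleton-type bound $|\C^{\perp}|\leq q^{n(m-d'+1)}$ then forces the minimum distance $d'$ of $\C^{\perp}$ to satisfy $d'\leq k+1$, so the only remaining task is the lower bound $d'\geq k+1$; equivalently, $\C^{\perp}$ must contain no nonzero matrix of rank at most $k$.

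Suppose for contradiction $Y\in\C^{\perp}$ has rank $r$ with $1\leq r\leq k$. Factor $Y=AB^T$ with $A\in M_{m\times r}(\Fq)$ and $B\in M_{n\times r}(\Fq)$ each of rank $r$, and extend them to $\tilde A\in\GL_m(\Fq)$ and $\tilde B\in\GL_n(\Fq)$ whose first $r$ columns are $A$ and $B$ respectively. Writing $A=\tilde A E$ and $B=\tilde B E'$ with $E,E'$ the obvious column-selector matrices, and setting $\tilde X:=\tilde A^T X\tilde B$, the cyclic invariance of $\Tr$ gives
\[
b(X,Y)=\tr\Tr(A^TXB)=\tr\Tr(E^T\tilde X E')=\tr\Tr\bigl(\tilde X_{[r],[r]}\bigr),
\]
where $\tilde X_{[r],[r]}$ denotes the top-left $r\times r$ block of $\tilde X$. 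The transformed code $\C':=\{\tilde A^T X\tilde B:X\in\C\}$ is equivalent to $\C$ and hence $\Fp$-linear MRD with the same parameters, and the condition $Y\in\C^{\perp}$ translates into the single $\Fp$-linear constraint $\tr\Tr(\tilde X_{[r],[r]})=0$ for every $\tilde X\in\C'$.

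To close, I would introduce the projection $\pi:M_{m\times n}(\Fq)\to M_{k\times n}(\Fq)$ onto the first $k$ rows. The matrices in $\ker\pi$ have rank at most $m-k$, so the MRD property of $\C'$ forces $\ker\pi\cap\C'=\{0\}$, and then the equality $|\C'|=q^{nk}=|M_{k\times n}(\Fq)|$ yields $\pi(\C')=M_{k\times n}(\Fq)$. Since $r\leq k$, the block $\tilde X_{[r],[r]}$ is a function of the first $k$ rows of $\tilde X$ and can therefore be prescribed arbitrarily in $M_{r\times r}(\Fq)$; choosing it to be $\mathrm{diag}(\alpha,0,\ldots,0)$ for any $\alpha\in\Fq$ with $\tr(\alpha)\neq 0$ (such $\alpha$ exists by surjectivity of $\tr$) produces $\tilde X\in\C'$ with $b(X,Y)=\tr(\alpha)\neq 0$, contradicting $Y\in\C^{\perp}$. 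The main obstacle is really the middle-step book-keeping: the factorisation $Y=AB^T$, the invertible extensions $\tilde A,\tilde B$ and the resulting change of variable must be chosen so that $b(X,Y)$ collapses cleanly to the $\Fp$-trace of the $\Fq$-trace of a single $r\times r$ sub-block of $\tilde X$; once that reduction is in place, the projection argument on the MRD code $\C'$ together with surjectivity of $\tr$ finishes the proof with no further computation.
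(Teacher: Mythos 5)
Your proof is correct, but note that the paper itself offers no proof of this statement: it simply cites Delsarte's original argument via the theory of association schemes and points to an elementary proof in Ravagnani's paper (which proceeds through MacWilliams-type identities for the rank metric). Your route is a genuinely different, self-contained alternative. Non-degeneracy of $b(X,Y)=\tr(\Tr(XY^T))$ pins down $|\C^{\perp}|=q^{n(m-k)}$, the Singleton-like bound gives $d'\leq k+1$, and the lower bound comes from the rank factorisation $Y=AB^T$, the change of variables $\tilde X=\tilde A^TX\tilde B$ collapsing $b(X,Y)$ to $\tr\Tr(\tilde X_{[r],[r]})$, and the key structural fact that an additive MRD code of size $q^{nk}$ meets the kernel of the projection onto the first $k$ rows trivially (that kernel consists of matrices of rank at most $m-k$) and hence projects onto all of $M_{k\times n}(\Fq)$; surjectivity of $\tr$ then produces a codeword violating orthogonality. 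All the bookkeeping checks out: equivalence preserves both additivity and minimum rank, and $r\leq k$ ensures the relevant block is controlled by the projection. What the heavier machinery in Delsarte and Ravagnani buys is the full rank-weight distribution of the dual code, not just its minimum distance; your argument delivers exactly the stated theorem using nothing beyond linear algebra and the non-degeneracy of the trace form. The only cosmetic caveat is the implicit assumption $1\leq k\leq m-1$, so that $\C^{\perp}$ is nontrivial and its minimum distance is defined; for $k=m$ the statement is vacuous.
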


Note that when $n=m=2$, $k=1$, both $\C$ and $\C^{\perp}$ are semifield spread sets (and correspond to {\it rank two semifields}, see \cite{LaPo2011}). In the context of semifields, this operation is known as the {\it translation dual}, see \cite{LuMaPoTr2008}, and is a special case of the {\it switching} operation defined in \cite{BEL2007}.

It is clear that two codes $\C$ and $\C'$ are equivalent if and only if $\C^{\perp}$ and $\C'^{\perp}$ are equivalent; this was shown in \cite{LuMaPoTr2008} for semifield spread sets, the same proof works for this more general statement. Hence the classification of $[n^2,n^2-n,2]_{q_0}$-codes is equivalent to the classification of semifields of order $q^n$ with nucleus containing $\Fq$ and centre containing $\FF_{q_0}$ up to isotopy.

\subsection{Known classifications and computational results}

The only known classification results for MRD codes are those that follow from classifications of semifields. Semifields of order $q^3$ with centre containing $\Fq$ have been fully classified by Menichetti \cite{Menichetti1977}; they are the spread sets corresponding to either a field or a {\it generalised twisted field}. Because of Section \ref{subsec:dual}, we have a full classification of all $\Fq$-linear MRD codes in $M_3(\Fq)$: they are the spread sets corresponding to either a field or a generalised twisted field (minimum distance $3$), or the Delsarte dual of one of these (minimum distance $2$). Precise conditions for the equivalence of spread sets arising from generalised twisted fields can be found in \cite{BiJhJo1999}. As an example, there are precisely two classes of semifields of order $27$, and so two equivalence classes of codes in $M_3(\FF_3)$ with parameters $[9,3,3]$, and two with parameters $[9,6,2]$.

Further computational classifications of semifields of order $q^n$, and hence MRD codes with parameters $[n^2,n,n]$ and $[n^2,n^2-n,2]$, have been performed for small values of $q$ and $n$; namely $q^n \in \{2^4,2^5,2^6,3^4,3^5,5^4,7^4\}$. See \cite[Table 4]{RuCoRa2012} for a referenced up-to-date summary. Explicit matrix representation for some of these can be found at \cite{DempData}.



When $n=4$, $q=2$, there are three isotopy classes of semifields of order $16$, and so three equivalence classes of MRD-codes in $M_4(\FF_2)$ with minimum distance $4$ (and with minimum distance $2$, by duality). It remains to classify those with minimum distance $3$, i.e. $8$ dimensional subspaces  of $M_4(\FF_2)$ where all nonzero elements have rank at least $3$. A computation with the computer algebra package MAGMA shows that there is only one MRD-code with $k=2$ containing a semifield spread set; the Gabidulin code, an $8$-dimensional code of minimum distance $3$. The only semifield spread sets it contains are all isotopic to $\FF_{16}$. Of the other two semifield spread sets, one is maximal as a rank metric code of minimum distance $3$; that is, if we extend it by any other matrix, the resulting $5$-dimensional code will always contain an element of rank at most $2$. The spread set corresponding to the final semifield is contained in a $5$-dimensional code of minimum distance $3$, but not any $6$-dimensional code of minimum distance $3$.

However, this does not complete the classification of such MRD-codes. In \cite{DuGoMcSh2010} it was shown that the set of elements of minimum rank in an MRD-code is partitioned into constant rank subspaces of dimension $n$, each lying in the annihilator of a subspace of dimension $k-1$. Hence an MRD-code with $k=2$ must contain an $n$-dimensional constant rank $n-1$ subspace of $\mathrm{Ann}(u)$ for each $u$. However it is not clear whether such a code must contain an $n$-dimensional constant rank $n$ subspace, i.e. a semifield spread set. This remains an open problem.
 



\section{Linearized polynomials, and properties of the Gabidulin code}

Let us consider an $n$-dimensional vector space over $\Fq$, which we will denote by $V(n,q)$. We will often identify $V(n,q)$ with the elements of the field extension $\Fqn$. It is well known that every $\Fq$-linear transformation from $\Fqn$ to itself may be represented by a unique {\it linearized polynomial} of $q$-degree at most $n-1$: that is,
\[
M_n(\Fq) \simeq L_n := \{ f_0 x + f_1 x^q + \ldots + f_{n-1}x^{q^{n-1}} : f_i \in \Fqn\}.
\]
Recall that the $q$-degree of a non-zero polynomial is the maximum $i$ such that $f_i \ne 0$. These linearized polynomials form a ring isomorphic to $M_n(\Fq)$, with the multiplication being composition modulo $x^{q^n}-x$ (which we will denote by $\circ$). The foundations of this theory can be found in \cite{Ore1933}.

It is straightforward to see that linearized polynomial of $q$-degree $k$ has rank at least $n-k$. This follows from the fact that such a polynomial can have at most $q^k$ roots, and hence its kernel (when viewed as a linear transformation) has dimension at most $k$, implying that its rank is at least $n-k$. In fact, this turns out to be a special case of the following more general non-trivial result \cite[Theorem 5]{GoQu2009b}.

\begin{theorem}[\cite{GoQu2009b}]
\label{thm:galpol}
Let $\mathbb{L}$ be a cyclic Galois extension of a field $\FF$ of degree $n$, and suppose that $\sigma$ generates the Galois
group of $\mathbb{L}$ over $\FF$. Let $k$ be an integer satisfying $1\leq k< n$, and let $f_0,f_1,\ldots f_{k-1}$ be elements of $\mathbb{L}$, not all zero. Then the $\FF$-linear transformation defined as
\[
f(x) =  f_0x+f_1x^{\sigma}+\cdots+f_{n-1}x^{\sigma^{k-1}}
\]
has rank at least $n-k$.
\end{theorem}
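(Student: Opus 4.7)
The plan is to replace the root-counting argument (which was specific to the Frobenius case) by a generalised Moore determinant lemma that works for any cyclic Galois extension. First I would reformulate the target: the bound $\rk(f) \ge n-k$ is equivalent to $\dim_{\mathbb{F}}\ker f \le k$, and the approach below will in fact yield the slightly stronger bound $\dim_{\mathbb{F}} \ker f \le k-1$, from which $\rk(f) \ge n-k+1 \ge n-k$.

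The central ingredient is the following lemma: for any $r \le n$ and any $\mathbb{F}$-linearly independent $u_0,\ldots,u_{r-1}\in\mathbb{L}$, the matrix $M(u_0,\ldots,u_{r-1}) := \bigl(\sigma^i(u_j)\bigr)_{0\le i,j\le r-1}$ has nonzero determinant. I would prove this by induction on $r$; the case $r=1$ is immediate from $u_0 \ne 0$. For the inductive step, suppose $\det M = 0$. By the induction hypothesis no proper subset of the columns is dependent, so any nonzero $\mathbb{L}$-linear relation $\sum_{j=0}^{r-1} c_j \sigma^i(u_j) = 0$ (holding for $i=0,\ldots,r-1$) must have all $c_j$ nonzero; normalise $c_0 = 1$. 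For each $i \ge 1$, apply $\sigma$ to the row-$(i-1)$ equation and subtract from the row-$i$ equation; the $j=0$ terms cancel, leaving
\[
\sum_{j=1}^{r-1}\bigl(c_j - \sigma(c_j)\bigr)\,\sigma^i(u_j) = 0 \qquad (i=1,\ldots,r-1).
\]
Re-indexing with $v_j := \sigma(u_j)$ and $i' := i-1$, this is an $\mathbb{L}$-dependence among the columns of $M(v_1,\ldots,v_{r-1})$, whose arguments are still $\mathbb{F}$-linearly independent since $\sigma$ is an $\mathbb{F}$-linear bijection. By induction the smaller Moore matrix is nonsingular, so $c_j - \sigma(c_j) = 0$ for all $j \ge 1$, i.e.\ $c_j \in \mathbb{L}^\sigma = \mathbb{F}$. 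Combined with $c_0 = 1 \in \mathbb{F}$, the row-$0$ equation $\sum_j c_j u_j = 0$ is then an $\mathbb{F}$-linear dependence of the $u_j$, a contradiction.

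With the lemma in hand the theorem follows easily: if $\ker f$ contained $\mathbb{F}$-linearly independent $u_0,\ldots,u_{k-1}$, the $k$ equations $f(u_j)=0$ would read $(f_0,\ldots,f_{k-1})\cdot M(u_0,\ldots,u_{k-1}) = 0$; since $(f_0,\ldots,f_{k-1})\ne 0$ by hypothesis, the Moore matrix would be singular, contradicting the lemma. Hence $\dim_{\mathbb{F}}\ker f \le k-1$ and $\rk(f) \ge n-k+1$.

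The main obstacle is the lemma itself. In the Frobenius case the classical Moore determinant is usually proved using the equivalence $a^q = a \iff a \in \mathbb{F}_q$; here the $\sigma$-twist trick of applying $\sigma$ to one row and subtracting from the next is the precise replacement, exploiting only that $\mathbb{L}^\sigma = \mathbb{F}$ and that $\sigma$ preserves $\mathbb{F}$-linear independence. The technical point is arranging the inductive setup so that the twist produces a relation in a genuine smaller Moore matrix to which the induction applies, rather than an unrelated object.
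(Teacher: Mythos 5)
Your argument is correct, and it takes a route the paper itself does not: the paper offers no proof of this statement, quoting it from Gow--Quinlan \cite{GoQu2009b} (their Theorem 5), with the only in-text justification being the root-counting remark for the special case $\mathbb{L}=\Fqn$, $x^{\sigma}=x^q$. Your substitute is the nonsingularity of the generalised Moore matrix $\bigl(\sigma^i(u_j)\bigr)$ for $\FF$-linearly independent $u_j$, proved by induction via the ``apply $\sigma$ to a row and subtract'' twist, and the key points all hold: $\sigma$ preserves $\FF$-independence, the fixed field of $\langle\sigma\rangle$ is exactly $\FF$ (this is where cyclicity and the hypothesis that $\sigma$ generates the Galois group enter), and your claim that no proper subset of columns can be dependent does follow from the induction hypothesis applied to the truncated square submatrix on the first rows. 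The application is also right: $k$ independent kernel elements would make $(f_0,\dots,f_{k-1})$ a nonzero left null vector of a $k\times k$ Moore matrix. As you note, this yields the sharper conclusion $\dim_{\FF}\ker f\le k-1$, i.e.\ $\rank(f)\ge n-k+1$, which implies the stated bound $n-k$ (the paper's formulation is not tight for $k$ coefficients $f_0,\dots,f_{k-1}$, and also contains a typo, $f_{n-1}$ for $f_{k-1}$). What your route buys is a self-contained proof valid over an arbitrary cyclic extension, using essentially the same determinantal tool the paper later employs in its proof of Lemma~\ref{lem:coeffs}; what the paper's route buys is brevity, outsourcing the general case to the cited reference.
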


Taking $\mathbb{L}=\Fqn$, $\FF = \Fq$, and  $x^{\sigma} = x^q$ returns the above statement about linearized polynomials. Furthermore, if we take $x^{\sigma} = x^{q^s}$ for some $s$ relatively prime to $n$, then we get that a linearized polynomial of the form 
\[
f_0 x + f_1 x^{q^s} + \ldots + f_{k-1}x^{q^{s(k-1)}}
\]
has rank at least $n-k$. Thus letting $\G_{k,s}$ denote the set of linearized polynomials of this form, for a fixed $s$ and $k$, will give us  an MRD-code. 
\[
\G_{k,s} := \{ f_0 x + f_1 x^{q^s} + \ldots + f_{k-1}x^{q^{s(k-1)}} : a_i \in \Fqn\}.
\]

These are the {\it generalised Gabidulin codes} \cite{GaKs2005}, and are MRD-codes with dimension $nk$ and minimum rank-distance $n-k+1$. 

We define $\G_k = \G_{k,1}$, which is then the set of linearized polynomials of degree at most $k-1$, i.e.
\[
\G_k := \{ f_0 x + f_1 x^q + \ldots + f_{k-1}x^{q^{k-1}} : a_i \in \Fqn\}.
\]
These were first constructed by Delsarte \cite{Delsarte1978}, though in much of the literature they are referred to as Gabidulin codes. For much of the remainder of this paper, we will restrict ourselves to considering the family $\G_k$, though analogous results hold for $\G_{k,s}$. Each code $\G_{1,s}$ is a semifield spread set, and all are equivalent and correspond to the field $\Fqn$.

\begin{remark}
It should be noted that the map 
\[
f_0x+f_1x^{q}+\cdots+f_{n-1}x^{q^{k-1}}\mapsto f_0 x + f_1 x^{q^s} + \ldots + f_{k-1}x^{q^{s(k-1)}}
\]
does {\it not} preserve the rank distance. It was shown in \cite{GaKs2005} that there exist codes in $\G_{k,s}$ inequivalent to any in $\G_k$ for particular values of $k,s$ and $q$. The question of equivalence between generalised Gabidulin codes will be further addressed in Remark \ref{rem:Hgal}.
\end{remark}

\begin{remark}
\label{rem:asvec}
Rank metric codes are sometimes viewed as codes in $(\Fqn)^m$. The theories are basically identical (see e.g. \cite{Morrison}), and the correspondence is as follows. If we choose $m$ elements $e_0,e_1,\ldots,e_{m-1}$ of $\Fqn$, linearly independent over $\Fq$ and spanning a subspace $U$, then we identify the linearized polynomial $f$ with the $m$-tuple
\[
v_f := (f(e_o),f(e_1),\ldots,f(e_{m-1}))\in (\Fqn)^m.
\]
Then the corresponding weight function on $(\Fqn)^m$ is given by $w(v_0,\ldots,v_{m-1}) = \dim_{\Fq} \langle v_0,\ldots,v_{m-1}\rangle$. It is straightforward to check that the weight of $v_f$ is then equal to the rank of the restriction of $f$ to $U$.

The main difference between the two settings is that a code in $(\Fqn)^m$ with this weight function are called {\it linear} if it is an $\Fqn$-subspace, which corresponds to the set of linearized polynomials forming an $\Fqn$-subspace of $L_n$. Clearly the generalised Gabidulin codes are all $\Fqn$-linear. Note however that $\Fqn$-linearity is not preserved by the equivalence as defined in this paper.
\end{remark}

The actions of $\GL(n,q)\times \GL(n,q)$ and $\Aut(\Fq)$ on $M_n(\Fq)$ (as defined in Section \ref{ssec:equiv})  can be translated to an actions on $L_n$ as follows. Given a pair of linearized polynomials $(g,h)\in L_n\times L_n$, where both $g$ and $h$ are invertible as linear transformations on $\Fqn$ (i.e. have no nontrivial roots in $\Fqn$), we define a map from $L_n$ to itself by
\[
f^{(g,h)} := g \circ f \circ h \mod x^{q^n}-x.
\]

Given a linearized polynomial $f$ and an automorphism $\rho$ of $\Fq$, we define $f^{\rho}(x) = f(x^{\rho^{-1}})^{\rho}\mod x^{q^n}-x$. Note that $f^{\rho}$ can be obtained by simply applying $\rho$ to each of the coefficients of $f$. If $q=p^e$ for $p$ a prime, and $x^{\rho} = x^{p^i}$, then $f^{\rho} = x^{p^i}\circ f \circ x^{p^{ne-i}}$. Hence, extending the above notation in a natural way, any automorphism of a code $\C$ can be written as $(g \circ x^{p^i},x^{p^{ne-i}} \circ h)$ for some linearized polynomials $g,h$.

The set $S := \{\alpha x: \alpha \in \Fqn^{\times}\}$ is a subgroup of $\GL(n,q)$ isomorphic to $\Fqn^{\times}$, and is what is known as a {\it Singer cycle}. Then $\G_{k,s}$ is fixed under the actions of $S$ defined by $f \mapsto (\alpha x) \circ f$ and  $f \mapsto f \circ (\alpha x)$, for $\alpha \in \Fqn^{\times}$. It is also fixed by $f \mapsto x^p \circ f \circ x^{p^{ne-1}}$, and hence by the group 
\[
\{(\alpha x^{p^i},\beta x^{p^{ne-i}}):\alpha,\beta \in \Fqn^{\times}, i \in \{0,\ldots,ne-1\}\}.
\]
We will show later that this is in fact the full stabiliser of $\G_k$ for each $k$.

The {\it adjoint} of a linearized polynomial $a = \sum_{i=0}^{n-1} a_i x^{q^i}$ with respect to the symmetric bilinear form $(a,b) \mapsto \Tr(ab)$ (where $\Tr$ denotes the absolute trace from $\Fqn$ to $\Fp$) is given by $\hat{a} = \sum_{i=0}^{n-1} a_{n-i}^{q^i} x^{q^i}$. It is easy to check that $x^{q^k} \circ \hat{\G}_k = \G_k$, and hence we have the following.

\begin{lemma}
Each Gabidulin code $\G_k$ is equivalent to its adjoint $\hat{\G_k}$.
\end{lemma}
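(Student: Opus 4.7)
The plan is to exhibit an invertible linearized polynomial $g$ such that $g \circ \hat{\G}_k = \G_k$; this immediately yields equivalence of $\G_k$ and $\hat{\G}_k$ via the pair $(g, x)$ in the $\GL(n,q) \times \GL(n,q)$-action on $L_n$ described in Subsection~\ref{ssec:equiv}. The right choice is $g = x^{q^{k-1}}$, a power of the Frobenius lying in the Singer cycle.

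The first step is to compute the adjoint of a generic $f = \sum_{i=0}^{k-1} f_i x^{q^i} \in \G_k$. By the displayed formula $\hat{a} = \sum_{i=0}^{n-1} a_{n-i \bmod n}^{q^i} x^{q^i}$, the coefficient of $x^{q^i}$ in $\hat{f}$ vanishes unless $n - i \bmod n \in \{0, 1, \dots, k-1\}$, i.e.\ unless $i \in \{0\} \cup \{n-k+1, \dots, n-1\}$. Explicitly,
\[
\hat{f}(x) = f_0\, x + \sum_{i = n-k+1}^{n-1} f_{n-i}^{q^{i}}\, x^{q^{i}}.
\]

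Next, I compose on the left with $x^{q^{k-1}}$. This operation raises the polynomial to the $q^{k-1}$-th power, shifting every monomial $x^{q^i}$ to $x^{q^{i+k-1}}$ modulo $x^{q^n}-x$ while Frobenius-twisting the coefficient. The support $\{0\} \cup \{n-k+1, \dots, n-1\}$ is shifted by $+(k-1) \pmod{n}$ to $\{k-1\} \cup \{0, 1, \dots, k-2\} = \{0, 1, \dots, k-1\}$, which is precisely the support of $\G_k$. A direct calculation, with $j = (i+k-1) \bmod n$, yields
\[
x^{q^{k-1}} \circ \hat{f}(x) = \sum_{j=0}^{k-1} f_{k-1-j}^{q^{j}}\, x^{q^{j}} \in \G_k.
\]

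Finally, the map $f \mapsto x^{q^{k-1}} \circ \hat{f}$ is a composition of two bijections on $L_n$ (adjoint, and left-composition with an invertible linearized polynomial), hence injective; the displayed formula shows its image lies in $\G_k$, and as both $\G_k$ and its image have $\Fq$-dimension $nk$, the image is all of $\G_k$. Therefore $x^{q^{k-1}} \circ \hat{\G}_k = \G_k$, establishing the lemma. There is no substantive obstacle here; the only care required is in the modular index bookkeeping that identifies the shifted support as $\{0, 1, \dots, k-1\}$.
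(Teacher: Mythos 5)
Your proof is correct and follows essentially the same route as the paper: compose the adjoint with a Frobenius power on the left and check the supports match, the only difference being that you carry out the index bookkeeping explicitly. Note that your exponent is the accurate one: with the paper's adjoint formula one indeed gets $x^{q^{k-1}} \circ \hat{\G}_k = \G_k$, whereas the paper's displayed $x^{q^k} \circ \hat{\G}_k$ equals $\G_k \circ x^q$ (still equivalent to $\G_k$, so the lemma is unaffected).
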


We define the symmetric bilinear form $b$ on linearized polynomials  by
\[
b\left(\sum_{i=0}^{n-1} f_i x^{q^i},\sum_{i=0}^{n-1} g_i x^{q^i}\right) = \Tr\left(\sum_{i=0}^{n-1} f_i g_i\right).
\] 
We may choose an $\Fq$-basis for $\Fqn$ in such a way that this bilinear form coincides with the form introduced in Section \ref{subsec:dual}, which motivates the use of the same symbol $b$. Note that $\sum_{i=0}^{n-1} f_i g_i$ is the coefficient of $x$ in $f\hat{g}$. The following Lemma is immediate.
\begin{lemma}
The Delsarte dual $\G_k^{\perp}$ of a Gabidulin code $\G_k$ is equivalent to $\G_{n-k-1}$.
\end{lemma}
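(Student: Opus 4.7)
The plan is to compute $\G_k^\perp$ directly from its defining formula, and then transport it to a standard Gabidulin code using one of the rank-metric equivalence operations introduced at the start of this section, namely right-composition with a power of Frobenius.

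For the first step I would let $f = \sum_{i=0}^{k-1} f_i x^{q^i}$ range over $\G_k$ and let $g = \sum_{i=0}^{n-1} g_i x^{q^i}$ be an arbitrary element of $L_n$. The formula just defined reads
$$b(f,g) = \Tr\!\left(\sum_{i=0}^{k-1} f_i g_i\right).$$
Demanding that this vanish for every choice of $f_0,\ldots,f_{k-1}\in\Fqn$, varying one $f_i$ at a time and using nondegeneracy of the absolute trace from $\Fqn$ to $\F_p$, forces $g_0=g_1=\cdots=g_{k-1}=0$. Hence
$$\G_k^\perp = \Bigl\{\sum_{i=k}^{n-1} g_i x^{q^i} : g_i \in \Fqn\Bigr\}.$$

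For the second step I would right-compose with $x^{q^{n-k}}$, which is invertible on $\Fqn$ and so implements a rank-metric equivalence. Using $x^{q^n}\equiv x$ in $L_n$, we get
$$\bigl(g_i x^{q^i}\bigr) \circ x^{q^{n-k}} = g_i x^{q^{i+n-k}} \equiv g_i x^{q^{i-k}} \pmod{x^{q^n}-x},$$
so applying this term by term carries $\G_k^\perp$ onto the set of linearized polynomials of $q$-degree at most $n-k-1$; this is precisely the claimed Gabidulin code with coefficient slots indexed from $0$ up to $n-k-1$, matching the dimension count $n(n-k)$ predicted by Delsarte's theorem.

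The only point requiring care is the bookkeeping of composition order and the Frobenius twists on the coefficients that would appear if one instead composed on the left; neither is a genuine obstacle because $x^{q^{n-k}}$ itself has coefficients in $\F_p$. The substantive content lies entirely in the first step, where the coordinatewise expression for $b$ together with the nondegeneracy of the trace reduces orthogonality to a vanishing condition on each coefficient slot independently.
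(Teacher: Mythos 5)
Your computation is correct and is exactly the argument the paper treats as immediate: the coefficientwise form $b$ together with nondegeneracy of the absolute trace forces $g_0=\cdots=g_{k-1}=0$, and right composition with $x^{q^{n-k}}$ (which indeed leaves the coefficients untwisted) shifts the surviving slots down to $0,\ldots,n-k-1$. One point you should flag explicitly rather than absorb silently: with the paper's indexing, in which $\G_r$ consists of the polynomials with coefficient slots $0,\ldots,r-1$ (so $q$-degree at most $r-1$ and $\Fq$-dimension $nr$), the code you obtain is $\G_{n-k}$, not $\G_{n-k-1}$; the dimension $n(n-k)$ you yourself cite from Delsarte's theorem confirms this, since $\G_{n-k-1}$ has dimension $n(n-k-1)$. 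The index $n-k-1$ in the statement appears to be an off-by-one slip of the same kind as the chain $\G_1\leq\cdots\leq\G_{n-1}\simeq M_n(\Fq)$ (which should terminate at $\G_n$), so your argument proves the intended statement; but as written you declare the polynomials of $q$-degree at most $n-k-1$ to be ``precisely the claimed Gabidulin code,'' which under the stated convention they are not, and a careful writeup should note and resolve that discrepancy.
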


Note also that the Gabidulin codes form a chain:
\[
\G_1 \leq \G_2 \leq \cdots \leq \G_{n-1} \simeq M_n(\Fq).
\]
We now consider which subspaces of a Gabidulin code $\G_k$ are equivalent to another Gabidulin code $\G_r$.

\begin{theorem}
\label{thm:gabembed}
A subspace $U$ of $\G_k$, $k\leq n-1$, is equivalent to $\G_r$ if and only if there exist invertible linearized polynomials $f,g$ such that  $U = \G_r ^{(f,g)} =  \{f \circ a \circ g : a \in \G_r\}$, where $f_0 =1$, and $\deg_q(f) + \deg_q(g) \leq k-r$. 
\end{theorem}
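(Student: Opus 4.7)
My plan has four main steps.

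First, for the ``if'' direction: given invertible $f, g \in L_n$ with $f_0 = 1$ and $\deg_q f + \deg_q g \leq k - r$, the map $a \mapsto f \circ a \circ g$ is an $\Fq$-linear rank-preserving isomorphism from $\G_r$ onto $U := \{f \circ a \circ g : a \in \G_r\}$, so $U$ is equivalent to $\G_r$. For each $a \in \G_r$, $\deg_q(f \circ a \circ g) \leq \deg_q f + \deg_q a + \deg_q g \leq (k-r)+(r-1) = k-1$ (with no reduction modulo $x^{q^n}-x$ needed since $k-1 \leq n-2$), so $U \subseteq \G_k$.

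Second, for the ``only if'' direction, the definition of equivalence in Section~\ref{ssec:equiv} gives invertible $F, G \in L_n$ and $\rho \in \Aut(\Fq)$ with $U = F \circ \G_r^\rho \circ G$. Since applying $\rho$ to the coefficients of a linearized polynomial preserves the $q$-degree restriction, $\G_r^\rho = \G_r$, and hence $U = F \circ \G_r \circ G$.

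Third, I would normalize $F_0 = 1$. If $F_0 = 0$, let $m \geq 1$ be smallest with $F_m \neq 0$ and write $F = x^{q^m} \circ F''$ where $F''_0 = F_m^{q^{n-m}} \neq 0$. Combining the commutation $x^{q^m} \circ h = h^{(q^m)} \circ x^{q^m}$ (with $h^{(q^m)}$ denoting $h$ with each coefficient raised to the $q^m$-th power) with the set-equality $x^{q^m} \circ \G_r = \G_r \circ x^{q^m}$ (both equal the $\Fqn$-span of $\{x^{q^{m}}, \ldots, x^{q^{m+r-1}}\}$, indices mod $n$), one obtains $U = (F'')^{(q^m)} \circ \G_r \circ (x^{q^m} \circ G)$, whose new left factor has nonzero constant term $F_m$. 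Scaling by $(\alpha x, \alpha^{-1} x) \in \Aut(\G_r)$ with $\alpha = 1/F_m$ then achieves $f_0 = 1$, giving a representation $U = f \circ \G_r \circ g$.

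The final and most delicate step is the degree bound $\deg_q f + \deg_q g \leq k - r$. Writing $d := \deg_q f$ and $e := \deg_q g$, consider $f \circ x^{q^{r-1}} \circ g \in U \subseteq \G_k$. When $d + e + r - 1 < n$ (no wraparound), its exact $q$-degree is $d + e + r - 1$ with leading coefficient $f_d g_e^{q^{d+r-1}} \neq 0$, so $d + e + r - 1 \leq k - 1$, as required. The main obstacle is wraparound. When $d + e \leq n - 1$ but $d + e + r - 1 \geq n$, setting $i^\ast := n - 1 - d - e \in \{0, \ldots, r - 2\}$ produces $f \circ x^{q^{i^\ast}} \circ g$ of exact $q$-degree $n - 1$, contradicting $U \subseteq \G_k$ since $k \leq n - 1$. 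When $d + e \geq n$, I would apply the general stabilizer elements $(\alpha x^{q^m}, \beta x^{q^{n-m}}) \in \Aut(\G_r)$ with $m$ chosen so that the induced shifts trigger wraparound in both factors simultaneously (for instance $m = n - d$) and hence strictly decrease $\deg_q f + \deg_q g$; iterating with re-normalization of $f_0 = 1$ eventually reduces to one of the preceding subcases.
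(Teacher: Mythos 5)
Your ``if'' direction, the reduction to $\rho=\mathrm{id}$, the normalisation $f_0=1$, and the two subcases of step 4 with $d+e\leq n-1$ are all correct, and up to that point you are following essentially the same route as the paper: normalise by the stabiliser of $\G_r$ and then extract degree information from the coefficients of $f\circ x^{q^j}\circ g$. In fact your cases (a) and (b) treat the indices more carefully than the paper does, since the paper's final inference (from $f_s g_{m-s-r+1}=0$ for $m\geq k$ to $t\leq k-s-r$) tacitly assumes that the index $m-s-r+1$ does not reduce modulo $n$, i.e.\ it silently ignores exactly the wraparound phenomenon you isolate as ``the main obstacle.''

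The genuine gap is your case (c), $d+e\geq n$. The assertion that the stabiliser element with $m=n-d$ ``strictly decreases $\deg_q f+\deg_q g$'' is not proved and is false in general: such a move shifts the support of $f$ by $+m$ and the support of $g$ by $-m$ cyclically, so while $f_d$ moves to position $0$, any nonzero $f_i$ with $i$ just below $d$ moves to a position near $n-1$, and the degree of $g$ can grow as well; there is no monotone quantity here. Worse, the wraparound case is a genuine obstruction, not a technicality to be iterated away. Take $n=6$, $r=1$, $k=5$, and invertible $f,g$ whose coefficient supports are exactly $\{0,2,4\}$ (i.e.\ invertible $\Fqt$-linear maps of $\F_{q^6}$ with all three coefficients nonzero; these exist by counting). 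Every $f\circ\alpha x\circ g$ then has support in $\{0,2,4\}$, so $U=f\circ\G_1\circ g\subseteq\G_5$ and $U$ is equivalent to $\G_1$. But if $U=f''\circ\G_1\circ g''$ with $f'',g''$ invertible, then $f''^{-1}\circ f$ normalises the Singer cycle $\{\alpha x\}$ and hence equals $\gamma x^{q^j}$, so the supports of $f''$ and $g''$ are cyclic shifts of $\{0,2,4\}$ in opposite directions, giving $\deg_q f''+\deg_q g''\geq 8>k-r=4$ for every admissible representation. So no sequence of your moves can reach the claimed bound, and indeed the ``only if'' direction as literally stated fails for such parameters; any correct handling of case (c) must either exclude such subfield-supported configurations or restrict the gap $k-r$ (in the range where the paper actually uses the theorem, namely $k-r\leq 2$ and $k-r=0$, one can check directly that a single shift always suffices). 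So your diagnosis of where the difficulty lies is exactly right, but the proposed iteration does not close it, and it cannot be closed without modifying the statement.
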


\begin{proof}
Clearly if $f$ and $g$ are invertible linearized polynomials satisfying the condition on degrees, then $U$ is contained in $\G_k$, and isotopic to $\G_r$.

Note that for any $0 \ne \beta \in \Fqn$ and any $j \in \{0,\ldots,n-1\}$, we have that $\{f \circ a \circ g : a \in \G_r\} = \{f \circ (\beta x^{q^j}) \circ a \circ (\beta^{-1} x)^{q^{n-j}}\circ g : a \in \G_r\}$, and hence we may assume without loss of generality that $f_0 =1$.

Consider $f \circ \alpha x^{q^j} \circ g$, where $\alpha \in \Fqn$. Then the coefficient of $x^{q^m}$ is
\[
a_{m,j}(\alpha) := \sum_{i=0}^{n-1} f_i g_{m-i-j}^{q^i} \alpha^{q^i},
\]
where indices are taken modulo $n$. If $U$ is contained in $\G_k$, we must have that for each $m \geq k$, $j \leq r-1$, $a_{m,j}(\alpha)$ is zero for every $\alpha\in \Fqn$. Hence for all $m \geq k$, $j \leq r-1$ and $i \in \{0,\ldots,n-1\}$ we have that
\[
f_i g_{m-i-j} = 0.
\]
As $f_0 \ne 0$, we get that $g_m = 0$ for all $m \geq k$. Let $\deg_q(f) = s$, $\deg_q(g) = t$, and so $f_s g_t \ne 0$. Then $g_{m-s-r+1} = 0$ for all $m \in \{k,\ldots,n-1\} \ne \emptyset$, and hence $t \le k-s-r$, proving the claim. 
\end{proof}

In \cite[Proposition 6]{Morrison} it was shown that the group $\{(\alpha x,\beta x):\alpha,\beta \in \Fqn^{\times}\}$ is a subgroup of the automorphism group of $\G_k$. Note that the result in \cite[Theorem 4]{Morrison} refers to a different definition of equivalence to the definition used in this paper. We now give a complete description of the automorphism group of the Gabidulin codes.

\begin{theorem}
\label{thm:gabaut}
The automorphism group of the Gabidulin code $\G_k$ is given by
\[
\{(\alpha x^{p^i},\beta x^{p^{ne-i}}):\alpha,\beta \in \Fqn^{\times}, i \in \{0,\ldots,n-1\}\}.
\]
\end{theorem}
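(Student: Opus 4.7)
My plan is to mirror the strategy used in the proof of Theorem \ref{thm:gabembed}: first reduce to an $\Fq$-linear question, then translate stability of $\G_k$ under $(g,h)$ into vanishing conditions on the coefficients of $g$ and $h$, and finally read off the monomial shape combinatorially.

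For the reduction, I would use the discussion preceding the theorem: every automorphism of $\G_k$ is expressible as $(g\circ x^{p^i},\,x^{p^{ne-i}}\circ h)$ for some invertible $g,h\in L_n$ and some $i$. The Frobenius twist $f\mapsto x^{p^i}\circ f\circ x^{p^{ne-i}}$ simply raises every coefficient of $f$ to the $p^i$-th power, hence maps $\G_k$ into itself (the defining $q$-degree bound is unaffected). It therefore suffices to determine all invertible $\Fq$-linear pairs $(g,h)\in L_n\times L_n$ for which $g\circ\G_k\circ h=\G_k$, and then glue the Frobenius factor back in.

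For the main calculation, I would expand $g\circ\alpha x^{q^j}\circ h$ exactly as in the proof of Theorem \ref{thm:gabembed} and extract the coefficient of $x^{q^m}$; this comes out (with indices mod $n$) as $\sum_i g_i\alpha^{q^i}h_{m-i-j}^{q^{i+j}}$. The hypothesis $g\circ\G_k\circ h\subseteq\G_k$ forces this expression to vanish identically in $\alpha\in\Fqn$ for every $m\in\{k,\ldots,n-1\}$ and every $j\in\{0,\ldots,k-1\}$. Since a linearised polynomial in $\alpha$ of $q$-degree less than $n$ vanishing on all of $\Fqn$ is zero coefficient-wise, each product $g_ih_{m-i-j}$ must vanish. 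As $(m,j)$ ranges over $\{k,\ldots,n-1\}\times\{0,\ldots,k-1\}$, the difference $m-j$ sweeps out the full set $\{1,\ldots,n-1\}$, so the constraint collapses to the clean condition $g_ih_l=0$ whenever $i+l\not\equiv 0\pmod n$.

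From here the conclusion is immediate. If $g$ had two distinct nonzero coefficients $g_{i_1},g_{i_2}$, then every coefficient of $h$ would have to vanish, contradicting invertibility of $h$; hence $g=\alpha x^{q^{i_0}}$ for some $\alpha\in\Fqn^{\times}$ and some $i_0\in\{0,\ldots,n-1\}$, and the constraint then forces $h=\beta x^{q^{n-i_0}}$ with $\beta\in\Fqn^{\times}$. Combining these monomial pairs with the Frobenius twist absorbed in the reduction step recovers exactly the family in the statement. The main bookkeeping hurdles I anticipate are (i) confirming that $m-j$ really covers every nonzero residue modulo $n$ (which uses $k\leq n-1$, the range in which $\G_k$ is a proper subcode), and (ii) verifying that the $q^{i_0}$- and $p^i$-exponents package together into a single parameter modulo $ne$ in the final repackaging.
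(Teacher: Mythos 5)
Your proposal is correct and follows essentially the same route as the paper: absorb the field-automorphism part first (the paper phrases this as reducing to $\rho=\id$), then force the coefficients of $x^{q^m}$, $m\ge k$, in $g\circ\alpha x^{q^j}\circ h$ to vanish for all $\alpha$ and all $j\le k-1$, which is exactly the computation underlying Theorem \ref{thm:gabembed}. The only cosmetic difference is that you read off the monomial form of $g$ and $h$ directly from the resulting condition $g_ih_l=0$ whenever $i+l\not\equiv 0\pmod n$, whereas the paper normalizes $f_0=1$ and invokes the degree bound $\deg_q(f')+\deg_q(g')=0$ from the proof of Theorem \ref{thm:gabembed}.
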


\begin{proof}
Clearly the given group is a subgroup of the automorphism group of $\G_k$. Suppose $(\G_k^{\rho})^{(f, g)} = \G_k$ for some invertible linearized polynomials $f,g$ and some $\rho \in \Aut(\Fq)$.  As $\G_k^{\rho} = \G_k$ for all $\rho\in\Aut(\Fq)$, we may assume that $\rho$ is the identity. Then $f = f' \circ (\alpha x^{q^i})$ for some $\alpha \in \Fqn^{\times}$, $i \in \{0,\ldots,n-1\}$, where $f'$ such that $f'_0 = 1$. Let $g' = x^{q^i} \circ g$. Then $ \G_k^{(f', g')} = \G_k$, and by the proof of Theorem \ref{thm:gabembed}, we must have $\deg_q(f')+\deg_q(g') = 0$. Hence $f'=x$ and $g' = \beta^{q^i} x$ for some $\beta \in \Fqn^{\times}$, and so $(f,g) = (\alpha x^{q^i},\beta x^{q^{n-i}})$, proving the claim.
\end{proof}

\begin{remark}
An analogous proof shows that the automorphism group of any generalised Gabidulin code $\G_{k,s}$ is equal to the automorphism group of $\G_k$.
\end{remark}

%
%
%
%
%
%

\section{Construction of new linear MRD-codes}

The following Lemma is key to our construction. The result follows from \cite[Theorem 10]{GoQu2009a}, and in the case where $q$ is prime from \cite{Ore1933}. We give a proof for completeness. We denote the field norm from $\Fqn$ to $\Fq$ by $N$, i.e. $N(x) = x^{\frac{q^n-1}{q-1}}$.
\begin{lemma}
\label{lem:coeffs}
Suppose $f$ is a linearized polynomial of $q$-degree $k$. If $f$ has rank $n-k$, then $N(f_0) = (-1)^{kn}N(f_k)$.
\end{lemma}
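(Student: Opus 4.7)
The plan is to exploit the fact that when $f$ has $q$-degree $k$ and rank $n-k$, its $\Fq$-kernel $K := \ker f \subset \Fqn$ has dimension exactly $k$ (any larger would force more than $q^k$ roots of $f$), and to extract both $f_0$ and $f_k$ as alternating maximal minors of a single Moore-type matrix built from a basis of $K$.

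First I would fix an $\Fq$-basis $v_1,\ldots,v_k$ of $K$. The equations $f(v_j)=0$ rewrite as $\sum_{i=0}^{k} f_i v_j^{q^i} = 0$ for $j=1,\ldots,k$, which says that $(f_0,\ldots,f_k)$ lies in the right nullspace of the $k\times(k+1)$ matrix $M$ with entries $M_{j,i} = v_j^{q^i}$. The Moore determinant $\mu := \det(v_j^{q^i})_{\,0\le i\le k-1,\,1\le j\le k}$ is nonzero since the $v_j$ are $\Fq$-linearly independent, so $M$ has rank $k$ and the nullspace is one-dimensional. By the standard cofactor/Cramer identity, it is spanned by $\bigl((-1)^i\det M_{\hat i}\bigr)_{i=0}^{k}$, where $M_{\hat i}$ denotes $M$ with its $i$-th column deleted. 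Hence $f_i = c\,(-1)^i \det M_{\hat i}$ for some $c \in \Fqn^\times$.

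Next I would identify the two end minors. One reads off $\det M_{\hat k} = \mu$ directly, while $\det M_{\hat 0} = \mu^q$ because deleting column $0$ instead of column $k$ replaces every entry $v_j^{q^i}$ by $v_j^{q^{i+1}}$, which is entry-wise Frobenius, and Frobenius commutes with $\det$. Taking norms, and using that $N(\mu)\in\Fq$ is Frobenius-fixed,
\[
N(f_0)=N(c)\,N(\mu^q)=N(c)\,N(\mu), \qquad N(f_k)=N(c)\,N((-1)^k)\,N(\mu)=(-1)^{kn}\,N(c)\,N(\mu),
\]
so the claimed identity $N(f_0)=(-1)^{kn}N(f_k)$ drops out.

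The only step requiring any thought is the identification of the $f_i$ with alternating maximal minors of $M$, but this is the classical description of the right nullspace of a full-rank $k\times(k+1)$ matrix; everything else is routine sign-and-Frobenius bookkeeping, so I do not anticipate a genuine obstacle.
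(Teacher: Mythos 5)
Your proposal is correct and is essentially the paper's own argument in a slightly repackaged form: the paper builds a $(k+1)\times(k+1)$ Moore-type determinant from a basis of a $k$-dimensional annihilated subspace and reads off $f_0$ and $f_k$ from the cofactor expansion along the top row, which is exactly your description of the one-dimensional nullspace of the $k\times(k+1)$ Moore matrix by signed maximal minors, with the same key observation that the two end minors differ by an entry-wise Frobenius. The only cosmetic difference is that you work directly with $\ker f$ and carry the scalar $c$ explicitly, whereas the paper invokes uniqueness of the minimal polynomial of $U$ up to an $\Fqn$-multiple; both reduce to the same identity $f_0=(-1)^k f_k^q$ up to scalars.
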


\begin{proof}
For any $k$-dimensional $\Fq$-subspace $U$ of $\Fqn$, there is a unique monic linearized polynomial of $q$-degree $k$ that annihilates $U$ (that is, contains $U$ in its set of roots). We denote this by $m_U$ and call it the minimal polynomial of $U$,. Every linearized polynomial of degree $k$ annihilating $U$ is an $\Fqn$-multiple of $m_U$, and hence it suffices to prove the result for any particular linearized polynomial of degree $k$ annihilating $U$.

Choose an $\Fq$-basis $\{u_0,u_1,\ldots,u_{k-1}\}$ of $U$, and define a linearized polynomial $f$ as the determinant of a $(k+1)\times (k+1)$ matrix as follows:
\[
f(x) := \det\npmatrix{x&x^q&\cdots&x^{q^{k}}\\
u_0&u_0^q&\cdots&u_0^{q^{k}}\\
\vdots&\vdots&\ddots&\vdots\\
u_{k-1}&u_{k-1}^q&\cdots&u_{k-1}^{q^{k}}} = f_0x+f_1x^q+\cdots f_kx^k.
\]
Then it is clear to see that $f$ annihilates $U$, because plugging in any $u\in U$ for $x$, we get that the top row is an $\Fq$-linear combination of the remaining rows. Furthermore, expanding along the top row we see that $f_0=(-1)^kf_k^q$, and so $N(f_0) = (-1)^{kn}N(f_n)$, proving the claim.
\end{proof}

Note that the converse is not true for $k>1$; that is, $N(f_0) = (-1)^{kn}N(f_k)$ does not imply that $f$ has rank $n-k$.

\begin{theorem}
Let $\HH_k(\eta,h)$ denote the set of linearized polynomials of $q$-degree at most $k\leq n-1$ satisfying $f_k = \eta f_0^{q^h}$, with $\eta$ such that $N(\eta) \ne (-1)^{nk}$, i.e.
\[
\HH_k(\eta,h) := \{ f_0 x + f_1 x^q + \ldots + f_{k-1}x^{q^{k-1}} + \eta f_0^{q^h}x^{q^k} : f_i \in \Fqn\}.
\]
Then each $\HH_k(\eta,h)$ is an MRD-code with the same parameters as $\G_k$. 
\end{theorem}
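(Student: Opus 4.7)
The plan is to show that every nonzero element of $\HH_k(\eta,h)$ has rank at least $n-k+1$, which combined with a straightforward dimension count will establish the MRD property.

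First, I would count the dimension. The set $\HH_k(\eta,h)$ is parametrised $\Fq$-linearly by the tuple $(f_0,f_1,\ldots,f_{k-1})\in\Fqn^k$, since the coefficient $f_k$ is determined by $f_0$ via $f_k=\eta f_0^{q^h}$ (and the map $x\mapsto x^{q^h}$ is $\Fq$-linear). Hence $|\HH_k(\eta,h)|=q^{nk}$, matching the Gabidulin code $\G_k$. To match the Singleton bound with this cardinality, it suffices to prove that the minimum rank of any nonzero element is at least $n-k+1$.

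Next, take a nonzero $f\in\HH_k(\eta,h)$ and split into two cases. If $f_0=0$, then $f_k=\eta f_0^{q^h}=0$, so $f$ has $q$-degree at most $k-1$; by Theorem \ref{thm:galpol} applied with $\sigma:x\mapsto x^q$, the rank of $f$ is at least $n-k+1$ and we are done. If $f_0\neq 0$, then $f$ has $q$-degree exactly $k$, and the elementary kernel bound gives rank at least $n-k$. The key step is to rule out equality.

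For this I would argue by contradiction. Suppose $\rk(f)=n-k$. Then Lemma \ref{lem:coeffs} applies and yields $N(f_0)=(-1)^{nk}N(f_k)$. Using $f_k=\eta f_0^{q^h}$, and the fact that $N(f_0)\in\Fq$ is fixed by the Frobenius, we compute
\[
N(f_k)=N(\eta)\,N(f_0^{q^h})=N(\eta)\,N(f_0)^{q^h}=N(\eta)\,N(f_0).
\]
Since $f_0\neq 0$ gives $N(f_0)\neq 0$, we may cancel and obtain $N(\eta)=(-1)^{nk}$, contradicting the standing hypothesis $N(\eta)\neq(-1)^{nk}$. Thus $\rk(f)\geq n-k+1$, completing the proof. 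The main (minor) obstacle is just organising the norm identity cleanly and invoking Lemma \ref{lem:coeffs} in the right direction; everything else is bookkeeping.
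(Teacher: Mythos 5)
Your proof is correct and follows essentially the same route as the paper's: a dimension count plus Lemma \ref{lem:coeffs}, used via the norm identity $N(f_k)=N(\eta)N(f_0)$ to rule out rank exactly $n-k$, which is precisely the paper's argument with the case split and norm computation written out in more detail. The only nitpick is that when $\eta=0$ (allowed by $N(\eta)\neq(-1)^{nk}$, giving $\G_k$ itself) an element with $f_0\neq 0$ has $q$-degree at most $k-1$ rather than exactly $k$, but in that subcase the elementary degree bound already gives rank at least $n-k+1$, so nothing breaks.
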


\begin{proof}
It is clear that $\HH_k(\eta,h)$ has dimension $nk$ over $\Fq$. As $\deg(f) \leq q^k$ for all $f\in \HH_k(\eta,h)$, we have that $\rank(f) \geq n-k$. By Lemma \ref{lem:coeffs}, $\rank(f) >n-k$, and hence $\rank(f) \geq n-k+1$ for all $f\in \HH_k(\eta,h)$. It follows that $\HH_k(\eta,h)$ is an MRD-code with parameters $[n^2,nk,n-k+1]$, as claimed.
\end{proof}

\begin{theorem}
\label{thm:Hperp}
The adjoint of $\HH_k(\eta,h)$ is equivalent to $\HH_k(\eta^{-q^{k-h}},k-h)$, and the Delsarte dual satisfies $\HH_k(\eta,h)^{\perp} = \HH_{n-k}(-\eta^{q^{n-h}},n-h)$. 
%
\end{theorem}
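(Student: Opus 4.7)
The two assertions are independent so I would handle them separately, using only the adjoint formula $\hat a = \sum_{i=0}^{n-1} a_{n-i}^{q^i} x^{q^i}$, the bilinear form $b(f,g) = \Tr(\sum_i f_i g_i)$ recalled in Section \ref{subsec:dual}, and the trace identity $\Tr(\alpha^{q^j}) = \Tr(\alpha)$.

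For the adjoint statement I would substitute a general $f = f_0 x + f_1 x^q + \cdots + f_{k-1} x^{q^{k-1}} + \eta f_0^{q^h} x^{q^k} \in \HH_k(\eta,h)$ into the adjoint formula. The resulting $\hat f$ has non-zero coefficients only at the positions $0$ and $n-k, n-k+1, \ldots, n-1$. To restore the standard shape of an $\HH_k$-code I would compose on the left with $x^{q^k}$, mimicking the trick used for the Gabidulin code earlier. A direct expansion shows that $x^{q^k} \circ \hat f$ has coefficient $\eta f_0^{q^h}$ at position $0$, coefficient $f_{k-i}^{q^i}$ at position $i$ for $1 \leq i \leq k-1$, and coefficient $f_0^{q^k}$ at position $k$. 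Setting $g_0 := \eta f_0^{q^h}$ as the new free parameter (a valid substitution because $f_0 \mapsto \eta f_0^{q^h}$ is an $\Fq$-linear bijection of $\Fqn$) and solving gives the position-$k$ coefficient $f_0^{q^k} = \eta^{-q^{k-h}} g_0^{q^{k-h}}$, which is exactly the defining relation of $\HH_k(\eta^{-q^{k-h}}, k-h)$. The norm hypothesis is preserved since $N(\eta^{-q^{k-h}}) = N(\eta)^{-1} \neq (-1)^{nk}$; a dimension comparison yields the equality $x^{q^k} \circ \hat{\HH_k(\eta,h)} = \HH_k(\eta^{-q^{k-h}}, k-h)$, and since composition with the Frobenius $x^{q^k}$ is an allowable equivalence, the claim follows.

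For the Delsarte dual statement I would directly impose $b(f,g) = 0$ for every $f \in \HH_k(\eta,h)$. Using $f_{k+1} = \cdots = f_{n-1} = 0$ and $f_k = \eta f_0^{q^h}$, this reads
\[
\Tr\!\left(\sum_{i=0}^{k-1} f_i g_i + \eta f_0^{q^h} g_k\right) = 0 \quad \text{for all } f_0,\ldots,f_{k-1} \in \Fqn.
\]
Varying $f_1,\ldots,f_{k-1}$ independently and invoking non-degeneracy of $\Tr$ forces $g_1 = \cdots = g_{k-1} = 0$. Varying $f_0$ and using $\Tr(\eta f_0^{q^h} g_k) = \Tr(\eta^{q^{n-h}} g_k^{q^{n-h}} f_0)$ to move the Frobenius off $f_0$, the remaining condition collapses to $\Tr\!\bigl(f_0 \cdot (g_0 + \eta^{q^{n-h}} g_k^{q^{n-h}})\bigr) = 0$ for every $f_0$, whence $g_0 = -\eta^{q^{n-h}} g_k^{q^{n-h}}$. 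This relation, together with the freedom of $g_k, g_{k+1}, \ldots, g_{n-1}$, is precisely the defining data of $\HH_{n-k}(-\eta^{q^{n-h}}, n-h)$ (up to the natural relabeling of positions by composition with $x^{q^{n-k}}$), and a dimension count of $n(n-k)$ confirms no further constraints arise.

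The main obstacle is purely bookkeeping: keeping careful track of indices modulo $n$ and Frobenius exponents, particularly in the adjoint computation where reconciling the pattern of $q$-powers in $\hat f$ with the shift by $x^{q^k}$ requires some care. No new conceptual ingredient beyond the tools already introduced is needed.
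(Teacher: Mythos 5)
Your calculation is correct and is exactly the ``straightforward calculation'' the paper leaves to the reader: compute $x^{q^k}\circ\hat f$ coefficientwise for the adjoint, and impose $b(f,g)=0$ with the diagonal trace form for the Delsarte dual. One small remark: as you note, the dual as computed is supported on positions $\{0,k,\ldots,n-1\}$ and only becomes $\HH_{n-k}(-\eta^{q^{n-h}},n-h)$ after right composition with $x^{q^{n-k}}$, so the theorem's ``$=$'' should really be read as equivalence (up to this relabeling), which your argument establishes.
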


\begin{proof}
This follows from a straightforward calculation. 
%
%
%
\end{proof}

Note that $\HH_k(0,h) = \G_k$, so this family includes the Gabidulin codes. We now prove that there are new MRD-codes in this family. First we require the following lemma.
%
%
%
%
%
%
%
%
%
%

\begin{lemma}
\label{lem:uniquesubgab}
Suppose $\eta \ne 0$. Then there is a unique subspace of $\HH_k(\eta,h)$ equivalent to $\G_{k-1}$, unless $k\in\{1,n-1\}$, or $k=2$ and $h \leq 2$.
\end{lemma}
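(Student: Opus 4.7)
The plan is to show that the ``natural'' subspace
\[
N := \{f \in \HH_k(\eta,h) : f_0 = 0\} = \{f_1 x^q + \cdots + f_{k-1} x^{q^{k-1}} : f_i \in \Fqn\} = x^q \circ \G_{k-1},
\]
which is always equivalent to $\G_{k-1}$, is the unique such subspace under the stated hypotheses. The excluded cases correspond exactly to the failure points of the argument: for $k = 1$, $\G_0 = \{0\}$ and the statement is vacuous; for $k = n-1$, we have $\HH_{n-1}(\eta,h) \subseteq L_n = \G_n$ rather than sitting inside some smaller $\G_{k+1}$, so Theorem~\ref{thm:gabembed} cannot be invoked to bound $\deg_q f + \deg_q g$. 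For $2 \leq k \leq n-2$, the inclusion $\HH_k(\eta,h) \subseteq \G_{k+1}$ combined with Theorem~\ref{thm:gabembed} tells us that any candidate $U$ is of the form $U = \{f \circ a \circ g : a \in \G_{k-1}\}$ for invertible linearized polynomials $f,g$ with $f_0 = 1$ and $s + t \leq 2$, where $s := \deg_q f$ and $t := \deg_q g$.

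The core calculation is a direct expansion: writing $a = \sum_{j=0}^{k-2} a_j x^{q^j}$, one obtains
\[
c_0 = a_0 g_0, \qquad c_k = \sum_{\substack{i + l = 2 \\ i \leq s,\, l \leq t}} f_i\, a_{k-2}^{q^i}\, g_l^{q^{i + k - 2}},
\]
with the sum for $c_k$ empty (so $c_k = 0$) whenever $s + t \leq 1$. The containment $U \subseteq \HH_k(\eta,h)$ is exactly the polynomial identity $c_k = \eta c_0^{q^h}$ in the free variables $a_0,\ldots,a_{k-2}$.

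The case analysis then runs as follows. For $s + t \leq 1$, the identity becomes $\eta (a_0 g_0)^{q^h} \equiv 0$, forcing $g_0 = 0$ (since $\eta \neq 0$); the only sub-case compatible with invertibility of $g$ is $(s,t) = (0,1)$ with $g = g_1 x^q$, which recovers $U = N$. For $s + t = 2$ and $k \geq 3$, $c_k$ depends only on $a_{k-2}$ while $\eta c_0^{q^h}$ depends only on the independent variable $a_0$, so the identity forces both sides to vanish identically, contradicting $f_s g_t \neq 0$ in each of the sub-cases $(2,0), (1,1), (0,2)$. This establishes uniqueness whenever $3 \leq k \leq n-2$.

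For $k = 2$ the variables $a_0$ and $a_{k-2} = a_0$ coincide, so $c_2 = \eta c_0^{q^h}$ is a single-variable identity in $a_0$. The three sub-cases yield $c_2$ proportional to $a_0^{q^2}, a_0^q, a_0$ respectively, and each can match $\eta g_0^{q^h} a_0^{q^h}$ precisely when $h \equiv 2, 1, 0 \pmod{n}$; each matching produces genuine families of new $\G_1$-subspaces, accounting exactly for the exception $h \leq 2$. For $h \geq 3$ the monomials on the two sides cannot be equated, so only $N$ survives. The main obstacle is the expansion of $f \circ a \circ g$ together with the careful handling of invertibility ($g_0 \neq 0$ whenever $t = 0$, $g_t \neq 0$ always, $f_s \neq 0$ always), which is what closes off each potential $U$ in the $s + t = 2$, $k \geq 3$ analysis.
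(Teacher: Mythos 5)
Your proof is correct and follows essentially the same route as the paper: apply Theorem~\ref{thm:gabembed} inside $\G_{k+1}$, expand $f\circ a\circ g$ and compare the coefficients of $x$ and $x^{q^k}$, with the $k=2$ case degenerating because $a_0=a_{k-2}$. The only (harmless) overstatement is the unverified side remark that the cases $k=2$, $h\le 2$ genuinely produce further subspaces equivalent to $\G_1$ (this would require checking invertibility of the relevant $f,g$), but that claim is not part of the lemma's assertion, so the proof of the stated result stands.
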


\begin{proof}
Note that $\HH_k(\eta,h)$ is contained in $\G_{k+1}$. Suppose $U$ is a subspace of $\HH_k(\eta,h)$ equivalent to $\G_{k-1}$. Then by Theorem \ref{thm:gabembed}, $U = \G_{k-1}^{(f,g)}$ for some $f,g$ with $f_0 \ne 0$ and $\deg(f)+\deg(g) \leq 2$. 

If $\deg(f)=s$, then $\deg(g) \leq 2-s$. Let $b = \sum_{i=0}^{k-2} b_i x^{q^i}$ be a generic element of $\G_{k-1}$. Then the coefficient of $x$ in $f\circ b\circ g$ is $b_0 f_0 g_0$, while the coefficient of $x^{q^k}$ is $a_{k-2}^{q^s}f_s g_{2-s}^{q^{s}}$. Hence we must have $\eta(b_0 f_0 g_0)^{q^h} = b_{k-2}^{q^s}f_s g_{2-s}^{q^{s}}$ for all $b_0,b_{k-2}\in \Fqn$. As $\eta\ne 0$ and $f_0, f_s \ne 0$, if $k>2$ this is possible if and only if $g_0 =g_{2-s}=0$. Hence $s=0$, $g=g_1x^q$, implying $U = \G_{k-1}\circ x^q$, proving the claim.

If $k=2$, we get $\eta(b_0 f_0 g_0)^{q^h} = b_0^{q^s}f_s g_{2-s}^{q^{s}}$ for all $b_0\in \Fqn$, which is possible only if $h=s\leq 2$.
\end{proof}

This lemma allows us to calculate the automorphism group, and hence prove that the family $\HH_k(\eta,h)$ contains codes inequivalent to any generalised Gabidulin code, and therefore contains new MRD codes.

\begin{theorem}
Suppose $k\notin\{1,n-1\}$, $\eta \ne 0$. Then the automorphism group of $\HH_k(\eta,h)$ is
\[
\{(\alpha x^{p^i},\beta x^{p^{-i}}):\alpha,\beta \in \Fqn, \alpha^{1-q^h} (\beta^{q^k-q^h})^{p^i} \eta^{p^i}=\eta\}
\]
Hence $\HH_k(\eta,h)$ is not equivalent to $\G_{k,s}$ unless $k\in \{1,n-1\}$ and $h \in \{0,1\}$. Furthermore, $\HH_k(\eta,h)$ is equivalent to $\HH_k(\nu,j)$ if and only if $j=h$ and there exist $\alpha,\beta \in \Fqn$ such that $\nu = \alpha^{1-q^h} (\beta^{q^k-q^h})^{p^i} \eta^{p^i}$.
 \end{theorem}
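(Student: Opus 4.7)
The plan is to reduce both parts of the theorem to the uniqueness result of Lemma \ref{lem:uniquesubgab}. The key observation is that, for $\eta \ne 0$ and $k \notin \{1, n-1\}$ and outside the small exceptional case of that lemma, the subspace $W := \G_{k-1} \circ x^q$ is the \emph{unique} subspace of $\HH_k(\eta, h)$ equivalent to $\G_{k-1}$. Any equivalence between two codes of this shape must therefore map the distinguished subspace of the source to the corresponding one of the target, which pins down the shape of the equivalence up to a direct coefficient computation.

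The easy inclusion is a direct computation. For $f = \sum_{j=0}^k f_j x^{q^j} \in \HH_k(\eta,h)$, expand
\[
\alpha x^{p^i} \circ f \circ \beta x^{p^{ne-i}} = \sum_{j=0}^k \alpha \beta^{q^j p^i} f_j^{p^i} x^{q^j};
\]
the condition that the image again satisfies $\tilde{f}_k = \eta \tilde{f}_0^{q^h}$ collapses, after substituting $f_k = \eta f_0^{q^h}$, to $\alpha^{1-q^h}(\beta^{q^k - q^h})^{p^i}\eta^{p^i} = \eta$. For the reverse inclusion, take any automorphism $(f, g, \rho)$ of $\HH_k(\eta,h)$. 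Since $\rho$ sends $\HH_k(\eta,h)$ to $\HH_k(\eta^\rho, h)$, we can absorb $\rho$ into $\eta$ and assume $\rho = \id$. Uniqueness of $W$ then forces $f \circ W \circ g = W$; conjugating the right-hand factor by $x^q$ turns this into an automorphism of $\G_{k-1}$, and Theorem \ref{thm:gabaut} pins down $(f,g) = (\alpha x^{p^i}, \beta x^{p^{ne-i}})$. Requiring preservation of all of $\HH_k(\eta, h)$ rather than just of $W$ yields exactly the stated constraint.

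For the inequivalence with $\G_{k,s}$: the code $\G_{k,s}$ contains two distinct subspaces equivalent to $\G_{k-1}$, namely $\G_{k-1,s}$ (zero top coefficient) and $\G_{k-1,s}\circ x^{q^s}$ (zero bottom coefficient), so for $k \in \{2,\ldots,n-2\}$ and $\eta \ne 0$, Lemma \ref{lem:uniquesubgab} rules out any equivalence with $\HH_k(\eta,h)$. The boundary cases $k \in \{1, n-1\}$ are genuinely different: for $k=1$ one has a semifield spread set (a generalised twisted field, which coincides with the Gabidulin code exactly when $h \in \{0,1\}$), and the $k = n-1$ case reduces to $k=1$ by Delsarte duality via Theorem \ref{thm:Hperp}. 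Finally, for the equivalence characterisation between $\HH_k(\eta, h)$ and $\HH_k(\nu, j)$, the same uniqueness-of-$W$ argument forces $(f,g) = (\alpha x^{p^i}, \beta x^{p^{ne-i}})$ (after absorbing $\rho$); matching the top coefficient of the image against the defining relation yields an identity of the form $A f_0^{p^\ell q^h} = B f_0^{p^\ell q^j}$ holding for every $f_0 \in \Fqn$. Since $x^{p^a}$ and $x^{p^b}$ are linearly independent as linearized polynomials modulo $x^{q^n}-x$ for distinct $a, b \in \{0,\ldots,ne-1\}$, this forces $h = j$; equating the coefficients then gives the displayed orbit relation for $\nu$.

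The principal obstacle is the exceptional case $k = 2, h \leq 2$ of Lemma \ref{lem:uniquesubgab}, where uniqueness of $W$ fails and the entire strategy above stalls. In that range a separate direct analysis is required, either by tracking an additional invariant of the code (for instance the constant-rank substructure from \cite{DuGoMcSh2010}) or by combining the adjoint relation of Theorem \ref{thm:Hperp} with explicit coefficient matching to distinguish the relevant subspaces by some finer feature.
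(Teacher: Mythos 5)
Your core strategy for the automorphism group and for the equivalence $\HH_k(\eta,h)\sim\HH_k(\nu,j)$ is exactly the paper's: uniqueness of the subspace $\G_{k-1}\circ x^q$ (Lemma \ref{lem:uniquesubgab}), reduction to $\Aut(\G_{k-1})$ via Theorem \ref{thm:gabaut}, then matching the coefficients of $x$ and $x^{q^k}$ to force $j=h$ and the orbit condition; that part is sound. However, two of your steps have genuine gaps. First, the inequivalence with $\G_{k,s}$: you claim $\G_{k,s}$ contains two distinct subspaces equivalent to $\G_{k-1}$, namely $\G_{k-1,s}$ and $\G_{k-1,s}\circ x^{q^s}$. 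For $s=1$ this is true, but for general $s$ the code $\G_{k-1,s}$ is \emph{not} equivalent to $\G_{k-1}$ --- the paper itself stresses that the coefficient-relabelling map between $\G_k$ and $\G_{k,s}$ does not preserve rank and that generalised Gabidulin codes with different $s$ are typically inequivalent. So Lemma \ref{lem:uniquesubgab}, which counts subspaces equivalent to $\G_{k-1}$ specifically, yields no contradiction when $s\neq 1$. The paper instead uses an invariant that does transfer: by the remark following Theorem \ref{thm:gabaut}, $\Aut(\G_{k,s})=\Aut(\G_k)$, and the group you computed for $\HH_k(\eta,h)$ is a proper subgroup of this whenever $\eta\neq 0$; since equivalent codes have automorphism groups of the same order, $\HH_k(\eta,h)$ cannot be equivalent to $\G_{k,s}$.

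Second, the case $k=2$ (with $h\leq 2$) is not optional fine print: $k=2$ satisfies the hypothesis $k\notin\{1,n-1\}$ whenever $n\geq 4$, and you leave this case unproved, offering only vague suggestions. The paper closes it with the Delsarte dual: by Theorem \ref{thm:Hperp}, $\HH_2(\eta,h)^{\perp}=\HH_{n-2}(-\eta^{q^{n-h}},n-h)$, and $\Aut(\C^{\perp})=\{(\hat{f},\hat{g}):(f,g)\in\Aut(\C)\}$, so the statement for $k=2$ follows from the already-settled case $k=n-2$ (which lies outside the exceptional range of Lemma \ref{lem:uniquesubgab} when $n\geq 5$). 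Note that the adjoint relation you propose as a fallback does not help: it sends $\HH_2(\eta,h)$ to $\HH_2(\eta^{-q^{2-h}},2-h)$, and $2-h\leq 2$, so you remain inside the exceptional range; it is the Delsarte dual, not the adjoint, that escapes it.
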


\begin{proof}
Suppose first that $k\ne 2$. By Lemma \ref{lem:uniquesubgab}, there is a unique subspace of $\HH_k(\eta,h)$ equivalent to $\G_{k-1}$. Hence any isomorphism from $\HH_k(\eta,h)$ to $\HH_k(\nu,j)$ is also an automorphism of $\G_{k-1}$. By Theorem \ref{thm:gabaut}, these are all of the form $(\alpha x^{p^i},\beta x^{p^{ne-i}})$ for some $\alpha,\beta \in \Fqn^{\times}, i \in \{0,\ldots,ne-1\}$.

Now the coefficient of $x$ in a generic element of $(\alpha x^{p^i}) \circ \HH_k(\eta,h)\circ (\beta x^{p^{ne-i}})$ is equal to $\alpha \beta^{p^i} f_0^{p^i}$, while the coefficient of $x^{q^k}$ is $\alpha \beta^{p^{ke+i}} (\eta f_0^{q^h})^{p^i}$. Hence for this to lie in $\HH_k(\nu,j)$ we must have $\alpha \beta^{p^{ke+i}} (\eta f_0^{q^h})^{p^i} = \nu (\alpha \beta^{p^i} f_0^{p^i})^{q^j}$ for all $f_0 \in \Fqn$. This occurs if and only if $j=h$ and $\alpha \beta^{p^{ke+i}} \eta^{p^i} = \nu (\alpha \beta^{p^i})^{q^h}$, proving the last claim. Setting $\eta=\nu$ gives the automorphism group of $\HH_k(\eta,h)$.

 If $k=2<n-1$, then the result follows by taking into account Theorem \ref{thm:Hperp}, and noting that $\Aut(\C^{\perp}) = \{(\hat{f},\hat{g}):(f,g) \in \Aut(\C)\}$.

It is clear that $\Aut(\HH_k(\eta,h))$ is strictly smaller than $\Aut(\G_{s,k})$, unless $\eta=0$, proving that $\HH_k(\eta,h)$ is not equivalent to $\G_{k,s}$ for $k\notin\{1,n-1\}$.

When $k=1$, the code $\HH_k(\eta,h)$ is a semifield spread set corresponding to a {\it generalised twisted field}, as introduced by Albert \cite{Albert1961}, i.e a presemifield with multiplication $x\circ y = xy + \eta x^{q^h}y^q$, where $N(-\eta) \ne 1$. The equivalence and automorphisms of these follow from \cite{BiJhJo1999}. By duality, we get the result for $k=n-1$, completing the proof.
\end{proof}

Note that $\Aut(\HH_k(\eta,h))$ contains the Singer cycle $\{(\alpha x,x):\alpha \in \Fqn\}$ if $h=0$. In fact, $\HH_k(\eta,h)$ is $\Fqn$-linear in this case, showing that there exist $\Fqn$-linear MRD-codes which are not equivalent to generalised Gabidulin codes. This was also proved by computer calculation in \cite{MaTrAlcoma} for $n=4$, $k=2$, $q=3$. When $h=k$, $\Aut(\HH_k(\eta,h))$ contains the Singer cycle $\{(x,\beta x):\beta \in \Fqn\}$. However this code is not $\Fqn$-linear.

\begin{remark}
As noted in the proof above, when $k=1$, the code $\HH_k(\eta,h)$ is a semifield spread set corresponding to a {\it generalised twisted field}, see \cite{Albert1961}. Indeed, it was the construction of these twisted fields that provided the inspiration for this new family. For this reason we propose to name this family of codes {\it twisted Gabidulin codes}. 
\end{remark}

\begin{remark}
Note that when we view $\HH_k(\eta,h)$ as a code in $(\Fqn)^n$, it is $\Fqn$-linear if and only if $h=0$. 
\end{remark}

\begin{remark}
The special case $\HH_2(\eta,1)$ was discovered independently in \cite{OtOzAlcoma}. 
\end{remark}

\begin{remark}
Let $\phi_1,\phi_2$ be linearized polynomials. Define $\HH_k(\phi_1,\phi_2)$ to be the set of linearized polynomials of degree at most $k$ with $f_0 = \phi_1(a)$, $f_k = \phi_2(a)$; i.e
\[
\HH_k(\phi_1,\phi_2) = \{\phi_1(a)x+f_1x^q+\cdots+f_{k-1}x^{q^{k-1}}+\phi_2(a)x^{q^k}:a,f_1,\ldots,f_{k-1}\in \Fqn\}.
\] 
Then $\HH_k(\phi_1,\phi_2)$ is an MRD-code with parameters $[n^2,nk,n-k+1]$ if and only if 
\[
N(\phi_1(x)) \ne (-1)^{kn} N(\phi_2(x))
\]
for all $x \in \Fqn^{\times}$. This is equivalent to finding an $(n-1)$-dimensional subspace in $\PG(2n-1,q)$ disjoint from the projective hypersurface $\mathcal{Q}_{n-1,q}$ induced by the set
\[
\{(x,y) : x,y \in \Fqn, N(x) = N(y)\}.
\]
This hypersurface was studied in detail in \cite{LaShZa2013}. The only known pairs of functions $(\phi_1,\phi_2)$ satisfying this condition are equivalent to the pair $(x,\eta x^{q^h})$, and $\HH(x,\eta x^{q^h}) = \HH(\eta,h)$ by definition. Note that for each pair $(\phi_1,\phi_2)$, the code $\HH_k(\phi_1,\phi_2)$ is contained in $G_{k+1}$ and contains $G_{k-1}\circ x^q$, and every MRD-code satisfying this property is of the form $\HH_k(\phi_1,\phi_2)$ for some $\phi_1,\phi_2$. Note also that every such $(\phi_1,\phi_2)$ defines a presemifield, with multiplication
\[
x\phi_1(y) + x^{q^h}\phi_2(y).
\]
\end{remark}

\begin{remark}
\label{rem:Hgal}
Suppose $\mathbb{L}$ is a cyclic Galois extension of a field $\F$, and $\sigma$ a generator of the Galois group $\mathrm{Gal}(\mathbb{K}:\F)$. As mentioned in Theorem \ref{thm:galpol}, an analogous result regarding the rank of endomorphisms of the form $f(x) = \sum_{i=0}^{n-1} f_i x^{\sigma^i}$, $f_i \in \mathbb{L}$, holds. Thus MRD-codes exist in $M_n(\F)$ over any field $\F$ which admits a cyclic Galois extension of degree $n$. When $\mathbb{L}=\Fqn$, $\F=\Fq$, $x^{\sigma} = x^{q^s}$, this coincides precisely with the generalised Gabidulin codes. These codes in characteristic zero were studied in \cite{AuLoRo2013}, and have applications to {\it space-time coding}.

Because of \cite[Theorem 10]{GoQu2009a}, the generalisation of Lemma \ref{lem:coeffs}, we can similarly define MRD-codes $\HH_k(\eta,h;\sigma)$ as
\[
\HH_k(\eta,h;\sigma) = \{f_0 x + f_1 x^{\sigma} + \ldots + f_{k-1}x^{\sigma^{k-1}} + \eta f_0^{\sigma^h}x^{\sigma^k} : f_i \in \mathbb{L}\}\subset \End_{\FF}(\mathbb{L}),
\]
where $\eta\in \mathbb{L}$ satisfies $\eta^{1+\sigma+\cdots+\sigma^{n-1}}\ne 1$.
%

Then $\HH_k(\eta,h;\sigma)$ is also a MRD-code, and an analogous proof shows that is inequivalent to the Gabidulin codes. When $\mathbb{L}=\Fqn$, $\F=\Fq$, $x^{\sigma} = x^{q^s}$, we denote $\HH_k(\eta,h;\sigma):= \HH(\eta,h;s)$, and then $\HH_k(\eta,h)=\HH_k(\eta,h;1)$ by definition. 

Subsequent to the original submission of this paper, the question of equivalence for $\HH(\eta,h;s)$, $s\ne1$ was addressed in \cite{LuTrZh2016}.
\end{remark}

\begin{remark}
Given an MRD-code $\C$ in $M_n(\F)$, one can define a code $\overline{\C}$ in $M_{m \times n}(\F)$ by deleting the last $(n-m)$ rows from each element of $\C$. This turns out to also be an MRD-code, known as a {\it punctured} code. However it does not necessarily hold that if $\C$ and $\C'$ are inequivalent then $\overline{\C}$ and $\overline{\C'}$ are inequivalent. It is not clear whether or not the punctured codes obtained from $\HH_k(\eta,h)$ are inequivalent to those obtained from generalised Gabidulin codes.
\end{remark}

%
%
%
%
%

\section{Representations as matrices and vectors}

In order to transfer between a set of linearized polynomials and a set of matrices, we choose an $\Fq$-basis for $\Fqn$, and find the matrix $A$ corresponding to multiplication by a primitive element $\alpha$, and the matrix $S$ corresponding to the Frobenius automorphism $x \mapsto x^q$, with respect to this basis. Then the set
\[
\{A^i S^j:i,j \in \{0..n-1\}\}
\]
is a basis for $M_n(\Fq)$. For a linearized polynomial $f(x) = \sum_i f_ix^{q^i}$, we define $r_i$ such that $f_i = \alpha^{r_i}$ for all $i$ such that $f_i \ne 0$. Then $f$ corresponds to the matrix
\[
\sum_{f_i\ne 0} A^{r_i}S^i.
\]

Choosing the basis $\{1,\alpha,\ldots,\alpha^{n-1}\}$ will make the matrix $A$ easy to work with (it will be the companion matrix of the minimal polynomial of $\alpha$), while choosing $\alpha$ to be a normal element and the basis $\{\alpha,\alpha^q,\ldots,\alpha^{q^{n-1}}\}$ will make the matrix $S$ easy to work with (it will be a permutation matrix). We will choose the former in this paper.   We will give some example in the case $q=3$, $n=4$. We choose an element $\alpha$ of $\FF_{81}$ satisfying $\alpha^4=\alpha+1$. We will write the coordinates of an element of $\FF_{81}$ with respect to this basis as columns. In this case we have
\[
A = \npmatrix{0&0&0&1\\1&0&0&0\\0&1&0&0\\0&0&1&1},\quad S = \npmatrix{1&0&1&0\\0&0&1&2\\0&0&1&1\\0&1&1&1},
\]
Then for the Gabidulin code $\G_2$ with minimum distance $3$, which is an $8$-dimensional space, we have a basis $\{A^iS^j:i \in \{0,\ldots,3\},j\in \{0,1\}\}$. Explicitly, this is
\[
\npmatrix{1&0&0&0\\0&1&0&0\\0&0&1&0\\0&0&0&1},\quad\npmatrix{0&0&0&1\\1&0&0&0\\0&1&0&0\\0&0&1&1},\quad
\npmatrix{0&0&1&1\\0&0&0&1\\1&0&0&0\\0&1&1&1},\quad\npmatrix{0&1&1&1\\0&0&1&1\\0&0&0&1\\1&1&1&1},
\]
\[
\npmatrix{1&0&1&0\\0&0&1&2\\0&0&1&1\\0&1&1&1},\quad\npmatrix{0&1&1&1\\1&0&1&0\\0&0&1&2\\0&1&2&2},\quad
\npmatrix{0&1&2&2\\0&1&1&1\\1&0&1&0\\0&1&0&1},\quad\npmatrix{0&1&0&1\\0&1&2&2\\0&1&1&1\\1&1&1&1}.
\]
The first four matrices form a basis for $\G_1$, corresponding to the field $\FF_{81}$. As a code in $\FF_{81}^4$, performing the procedure described in Remark \ref{rem:asvec} on the $\FF_{81}$-basis $\{x,x^q\}$ give the generator matrix
\[
\npmatrix{1&\alpha&\alpha^2&\alpha^3\\1&\alpha^3&\alpha^6&\alpha^9}.
\]
For the new twisted Gabidulin code $\HH_2(\alpha,1) = \{ax+bx^3+\alpha a^3x^{9}:a,b \in \FF_{81}\}$, we can take a basis 
\[
\{A^i+A^{3i+1}S^2:i \in \{0,\ldots,3\}\}\cup \{A^iS:i \in \{0,\ldots,3\}\}.
\]
Explicitly we get
\[
\npmatrix{1&1&0&1\\1&1&2&1\\0&2&1&0\\0&2&2&1},\quad\npmatrix{1&1&1&2\\1&1&2&0\\0&0&2&0\\1&2&2&0},\quad
\npmatrix{2&2&1&2\\1&1&0&0\\2&2&1&2\\2&1&1&1},\quad\npmatrix{1&1&2&2\\0&2&2&0\\2&0&0&1\\0&0&1&1},
\]
\[
\npmatrix{1&0&1&0\\0&0&1&2\\0&0&1&1\\0&1&1&1},\quad\npmatrix{0&1&1&1\\1&0&1&0\\0&0&1&2\\0&1&2&2},\quad
\npmatrix{0&1&2&2\\0&1&1&1\\1&0&1&0\\0&1&0&1},\quad\npmatrix{0&1&0&1\\0&1&2&2\\0&1&1&1\\1&1&1&1}.
\]
In this case the first four matrices form a basis for $\HH_1(\alpha,1)$, corresponding to a generalised twisted field.

As a code in $\FF_{81}^4$, $\HH_2(\alpha,1)$ is not $\FF_{81}$-linear, but only $\FF_3$-linear. The codewords are then all $\FF_3$-linear combinations of the rows of the matrix
\[
\npmatrix{
\alpha^{28}&\alpha^{13}&\alpha^{17}&\alpha^{5}\\
\alpha^{5}&\alpha^{61}&\alpha^{54}&2\\
\alpha^{13}&\alpha^{55}&\alpha^{30}&\alpha^{50}\\
\alpha^{65}&\alpha^{}&\alpha^{}&\alpha^{}\\
1&\alpha^{3}&\alpha^{6}&\alpha^{9}\\
\alpha&\alpha^{4}&\alpha^{7}&\alpha^{10}\\
\alpha^{2}&\alpha^{5}&\alpha^{8}&\alpha^{11}\\
\alpha^{3}&\alpha^{6}&\alpha^{9}&\alpha^{12}}.
\]
As a code in $\FF_{81}^4$, the code $\HH_2(\alpha,0)$ is $\FF_{81}$-linear, and inequivalent to $\G_2$. It has generator matrix
\[
\npmatrix{1&\alpha^3&\alpha^7&\alpha^9\\\alpha^{28}&\alpha^{13}&\alpha^{17}&\alpha^5}.
\]

Classification of all $8$-dimensional spaces of $M_4(\FF_3)$ with minimum rank-distance $3$ remains an open problem. Recent work on $\FF_{81}$-linear codes can be found in \cite{MaTrAlcoma}.

\section{MRD-codes, scattered subspaces and scattered linear sets}

The {\it desarguesian spread} $\D$ in the vector space $V(2n,q)$ is the set of $n$-dimensional subspaces obtained by considering $1$-dimensional $\Fqn$-subspaces of $V(2,q^n)$ as $\Fq$-subspaces in $V(2n,q)$. It partitions the nonzero vectors, and any pair of elements of $\D$ intersect trivially. 

A subspace $U\leq V(2n,q)$ is said to be {\it scattered} with respect to $\D$ if $U$ intersects any element of $\D$ in a subspace of dimension at most $1$. The maximum dimension of such a subspace is $n$, and a subspace meeting this bound is called a {\it maximum scattered subspace}. The stabiliser of $\D$ in $\GammaL(2n,q)$ is $\GammaL(2,q^n)$, and so it is natural to consider subspaces of $V(2n,q)$ up to $\GammaL(2,q^n)$-equivalence. These concepts were first introduced in \cite{BlLa00}. Scattered subspaces have interesting connections to many topics, see for example \cite{La2016}.

The elements of $\D$ can be identified with the projective line $\PG(1,q^n)$. A {\it linear set of rank $r$} is a set $L(U) := \{ \langle u \rangle _{\Fqn}:u \in U^{\times}\}\subset \PG(1,q^n)$, where $U$ is an $\Fq$-subspace of $V(2,q^n)$ of dimension $r$. A linear set is said to be {\it scattered} if $|L(U)| = \frac{q^r-1}{q-1}$; this coincides with the subspace $U$ being scattered with respect to $\D$. Two linear sets $L(U)$ and $L(U')$ are said to be {\it equivalent} if there exists an element of $\PGammaL(2,q^n)$ mapping $L(U)$ to $L(U')$. Clearly equivalence of subspaces implies the equivalence of the corresponding linear sets. However, the converse is not true, as will be illustrated at the end of this section.

Given a linearized polynomial $f$, define $U_f := \{(y,f(y)): y \in \Fqn\}\leq V(2n,q)$. Note that every subspace of dimension $n$ is equivalent under $\GL(2,q^n)$ to a space of the form $U_f$. Then $U_f$ is scattered if and only if $\rank(f-\beta x) \geq n-1$ for all $\beta \in \Fqn$. This occurs if and only if $\rank(\alpha f -\beta x) \geq n-1$ for all $\alpha,\beta \in \Fqn$. We will call such a polynomial a {\it scattered polynomial}. Hence the set $\C_f := \{\alpha f - \beta x:\alpha,\beta \in \Fqn\} = \langle x,f\rangle_{\Fqn}$ is an MRD-code of dimension $2n$ and minimum distance $n-1$, i.e. $k=2$. Furthermore, $\C_f$ contains the identity map, and is an $\Fqn$-subspace of $L_n$. Indeed, such codes are in one-to-one correspondence with scattered subspaces of dimension $n$ with respect to $\D$. We show now that the notions of equivalence coincide.

It is straightforward to check that the linear sets $U_f$ and $U_g$ are equivalent if and only if $g = (\alpha f^{\rho}+\beta x)(\gamma f^{\rho} + \delta x)^{-1}$ for some $\alpha,\beta,\gamma,\delta \in \Fqn$, $\rho \in \Aut(\Fq)$, with $\alpha \delta-\beta\gamma\ne 0$ and $(\gamma f^{\rho} + \delta x)$ invertible. Hence if $U_f$ and $U_g$ are equivalent, then $\C_f$ and $\C_g$ are equivalent, as 
\begin{align*}
\C_g &= \langle x,g\rangle_{\Fqn} = \langle x,(\alpha f^{\rho}+\beta x)(\gamma f^{\rho} + \delta x)^{-1}\rangle_{\Fqn} \\
&= \langle \alpha f^{\rho}+\beta x,\gamma f^{\rho} + \delta x\rangle_{\Fqn} (\gamma f^{\rho} + \delta x)^{-1}\\
& = \langle x,f\rangle^{\rho}_{\Fqn} (\gamma f^{\rho} + \delta x)^{-1} = (\C_f)^{\rho} (\gamma f + \delta x)^{-1}.
\end{align*}

For the converse, it can be shown that if $X$ is an invertible linear map, then $X\C_f$ is an $\Fqn$-subspace if and only $X(x)=\alpha x^{\rho}$ for some $\alpha \in \Fqn$ and $\rho \in \Aut(\Fq)$, whence $X\C_f = \C_{f^{\rho}}$. Hence if $\C_g = X\C_f Y$, then $\C_g = \C_{f^{\rho}} Y$. As the identity is in $\C_g$, we get that $Y =  \C_{f^{\rho}}(\gamma f^{\rho} + \delta x)^{-1}$ for some $\gamma,\delta \in \Fqn$, and so $\C_f$ is equivalent to $\C_g$ if and only if $\C_g = (\C_f)^{\rho}(\gamma f^{\rho} + \delta x)^{-1}$. Then the argument from the preceding paragraph can be reversed to show that $U_f$ is equivalent to $U_g$. Hence we have shown the following.
\begin{theorem}
Let $f$ and $g$ be scattered linearized polynomials. Then $U_f$ and $U_g$ are equivalent if and only if $\C_f$ and $\C_g$ are equivalent as MRD-codes.
\end{theorem}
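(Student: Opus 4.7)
The plan is to establish both implications separately, building directly on the computations already displayed in the paragraphs preceding the statement.

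For the forward direction, I assume $U_f$ and $U_g$ are equivalent as linear sets, so by the criterion recorded in the text there exist $\alpha,\beta,\gamma,\delta \in \Fqn$ with $\alpha\delta-\beta\gamma \ne 0$ and some $\rho \in \Aut(\Fq)$ with $(\gamma f^{\rho}+\delta x)$ invertible such that $g = (\alpha f^{\rho}+\beta x)(\gamma f^{\rho}+\delta x)^{-1}$. Then the chain of equalities
\[
\C_g = \langle x,g\rangle_{\Fqn} = \langle \alpha f^{\rho}+\beta x,\,\gamma f^{\rho}+\delta x\rangle_{\Fqn}(\gamma f^{\rho}+\delta x)^{-1} = (\C_f)^{\rho}(\gamma f^{\rho}+\delta x)^{-1}
\]
exhibits $\C_g$ as the image of $\C_f$ under the field automorphism $\rho$ followed by right composition with the invertible polynomial $(\gamma f^{\rho}+\delta x)^{-1}$, both of which are permitted operations in the equivalence defined in Section~\ref{ssec:equiv}.

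For the converse, suppose $\C_g = A\,\C_f^{\rho}\,B$ for invertible linearized polynomials $A,B$ and some $\rho \in \Aut(\Fq)$. Since $\C_f^{\rho} = \C_{f^{\rho}}$, I absorb the automorphism into $f$ and work with $\C_g = A\,\C_{f^{\rho}}\,B$. The subspace $A\,\C_{f^{\rho}} = \C_g B^{-1}$ is $\Fqn$-linear because left scalar composition commutes with right composition, so $\Fqn$-linearity is preserved under right composition by $B^{-1}$. I then invoke the auxiliary claim, hinted at in the text, that $X\,\C_h$ can be $\Fqn$-linear only when $X(x) = \alpha x^{\sigma}$ for some $\alpha \in \Fqn^{\times}$ and $\sigma \in \Aut(\Fq)$; granting this, $A$ must have this form, hence $A\,\C_{f^{\rho}} = \C_{f^{\tau}}$ with $\tau := \rho\sigma$, and so $\C_g = \C_{f^{\tau}}\,B$. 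Since $x \in \C_g$, we have $B^{-1} \in \C_{f^{\tau}}$, say $B^{-1} = \gamma f^{\tau}+\delta x$; since $g \in \C_g$, we obtain $g = (\alpha' f^{\tau}+\beta' x)(\gamma f^{\tau}+\delta x)^{-1}$ with $\alpha'\delta-\beta'\gamma \ne 0$ (because $B$ is invertible). This is exactly the fractional-linear criterion recorded above for $\PGammaL(2,q^n)$-equivalence of $U_f$ and $U_g$.

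The main obstacle is the auxiliary claim that $X\,\C_f$ being $\Fqn$-linear forces $X(x)$ to be a Frobenius-twisted scalar. The condition unpacks to $X^{-1}\Fqn X \cdot \C_f \subseteq \C_f$, so the conjugate subfield $X^{-1}\Fqn X \subset L_n$ must stabilise the $2$-dimensional $\Fqn$-subspace $\C_f$ under left composition. Since $x \in \C_f$, this yields $X^{-1}\Fqn X \subseteq \C_f$ as an $n$-dimensional subfield sitting inside $\C_f$; the scattered hypothesis, which asserts that every non-zero element of $\C_f$ has rank at least $n-1$ so that $\C_f$ contains no zero divisors, combined with the ring-theoretic structure of the inclusion should force $X^{-1}\Fqn X = \Fqn$. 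Identifying the normaliser of the Singer cycle $\Fqn \subset L_n^{\times}$ as the semidirect product of the scalars with $\Aut(\Fq)$ then yields $X = \alpha x^{\sigma}$ and completes the argument.
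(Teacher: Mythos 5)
Your route is exactly the paper's: the forward implication via the displayed chain of $\Fqn$-spans, and the converse by reducing, through the claim that $X\C_f$ can be $\Fqn$-linear only when $X$ is a twisted scalar, to the fractional relation $g=(\alpha' f^{\tau}+\beta' x)(\gamma f^{\tau}+\delta x)^{-1}$. That skeleton is sound (the paper itself only asserts the auxiliary claim with ``it can be shown''), up to two small slips: for $A=\alpha x^{\sigma}$ one actually gets $A\circ\C_{f^{\rho}}=\C_{f^{\rho\sigma}}\circ x^{\sigma}$, with an extra right factor that must be absorbed into $B$ (the same abuse occurs in the paper); and the nonvanishing of $\alpha'\delta-\beta'\gamma$ comes from $g$ being scattered (a dependent pair would force $g$ to be a scalar map $\lambda x$, whose graph lies in a spread element), not from the invertibility of $B$.

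The genuine gap is in your justification of the auxiliary claim, which is the crux of the converse. The assertion that scatteredness means ``$\C_f$ contains no zero divisors'' is false: scattered only gives $\rank\geq n-1$, and a maximum scattered $U_f$ produces many elements of $\C_f$ of rank exactly $n-1$, which are zero divisors in $L_n$; and ``should force'' is not an argument. A correct completion (for $n\geq 3$): your unpacking gives $E:=X^{-1}\Fqn X\subseteq I:=\{h\in L_n: h\circ\C_f\subseteq\C_f\}$, and $I\subseteq\C_f$ because $x\in\C_f$. Every nonzero $h\in I$ is invertible: otherwise $\rank(h)=n-1$ (as $h\in\C_f$), and left composition by $h$ is injective on $\C_f$ (a nonzero $c\in\C_f$ has rank at least $n-1\geq 2$, so its image cannot lie in the one-dimensional kernel of $h$), hence surjective, contradicting that every element of $h\circ\C_f$ has rank at most $n-1$ while $x\in\C_f$ has rank $n$. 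Thus $I$ is a finite division ring, hence a field by Wedderburn, and it contains the scalars $\Fqn x$, which form a maximal subfield of $L_n\simeq M_n(\Fq)$; so $I=\Fqn x$, whence $E=\Fqn x$ by cardinality, $X$ normalises the Singer cycle, and $X=\alpha x^{p^i}$ as required. With that lemma in place, your argument coincides with the paper's.
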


Note that this does not imply that the linear sets $L(U_f)$ and $L(U_g)$ are equivalent if and only if $\C_f$ and $\C_g$ are equivalent; it is possible that two subspaces $U_f$ and $U_g$ define the same linear set, without being equivalent. For example, taking $f(x) =x^{q^s}$, $g(x) = x^{q^t}$, the linear sets $L(U_f)$ and $L(U_g)$ are equal if $\gcd(n,s)=\gcd(n,t)$, while the subspaces $U_f$ and $U_g$, and hence the generalised Gabidulin codes $\C_f = \G_{2,s}$ and $C_g = \G_{2,t}$, are inequivalent unless $s=t$ or $s+t=n$.


As far as the author is aware, there are only two known constructions for scattered subspaces of dimension $n$ in $V(2n,q)$. They are those defined by $f(x) = x^{q^s}$, $(n,s)=1$ (Blokhuis-Lavrauw \cite{BlLa00}), and $f(x) = x^q+\eta x^{q^{n-1}}$, with $N(\eta) \ne 1$ (Lunardon-Polverino \cite{LuPo1999}). The first family leads to generalised Gabidulin codes $\G_{2,s}$, while the second lead to codes equivalent to $\HH_2(\eta,1)$. These are the only $\Fqn$-linear codes in the family $\HH_2$, and so we do not obtain any new scattered linear sets from this construction.

\end{document}